\newcommand{\stsets}[1]{\mathbb{#1}}
\newcommand{\F}{\stsets{F}}
\newcommand{\N}{\stsets{N}}
\newcommand{\R}{\stsets{R}}
\newenvironment{Abs}{\vspace*{0.5cm}\begin{center} {\bf
    Abstract}\end{center}\vspace*{0cm}\nopagebreak
   \small  } {\vspace{0.5cm}}
\newenvironment{Key}{\vspace*{0.5cm}\noindent {\bf
    Keywords:}
   \sc } {\vspace{0.5cm}}
\newenvironment{AMS}{\vspace*{0.5cm}\noindent {\bf
    AMS 2010 Subject Classification:} } {\vspace{0.5cm}}
\newenvironment{Ack}{\vspace*{1.5cm} \begin{center}{\bf
    Acknowledgements}\end{center}\vspace*{0cm}\nopagebreak
   \rm } {\vspace{0.5cm}}
\theoremstyle{definition}
\theoremstyle{remark}
\newtheorem{Rem}{Remark}
\newtheorem{Exam}{Example}
\newtheoremstyle{mytheorem}{0.5cm}{0.2cm}{\slshape}{ }{\bfseries}{.}{ }{}
\theoremstyle{mytheorem}
\newtheorem{Th}{Theorem}
\newtheorem{Lem}{Lemma}
\newtheorem{Cor}{Corollary}
\renewcommand{\P}{{\bf P}}
\DeclareMathOperator{\E}{{\bf E}}
\DeclareMathOperator{\one}{{ 1\hspace*{-0.55ex}I}}
\newcommand{\cdf}{\textrm{c.d.f.\ }}
\newcommand{\deq}{\stackrel{{\mathcal{D}}}{=}}
\newcommand{\eps}{\varepsilon}
\renewcommand{\epsilon}{\varepsilon}
\renewcommand{\phi}{\varphi}
\newcommand{\thru}{,\dotsc,}
\newcommand{\seg}{{see, e.~g.,\nolinebreak}}
\newcommand{\iid}{{i.\,i.\,d.\ }}
\newcommand{\salg}{\mbox{$\sigma$}-algebra }
\newlength{\querylen}
\newcommand{\ie}{\textrm{i.\,e.\ }}
\newcommand{\as}{\textrm{a.\,s.\ }}
\newcommand{\bbb}{\mathcal{B}}
\newcommand{\fff}{\mathcal{F}}
\renewcommand{\lll}{\mathcal{L}}
\newcommand{\mmm}{\mathcal{M}}
\newcommand{\nnn}{\mathcal{N}}
\newcommand{\tM}{\tilde{M}}
\newcommand{\heta}{\hat{\eta}}
\newcommand{\beq}{\begin{equation}}
\newcommand{\eeq}{\end{equation}}
\newcommand{\card}{\mathrm{card}}
\newcommand{\ed}{\stackrel{D}{=}}
\begin{document}
\title{LISA: Locally Interacting Sequential Adsorption}
\date{\today}
\author{
Anton Muratov\thanks{Chalmers University of Technology,
    Department of Mathematical Sciences, Gothenburg, Sweden. Email: \texttt{[muratov|sergei.zuyev]@chalmers.se}} \and 
\addtocounter{footnote}{-1}
Sergei Zuyev\footnotemark}
\maketitle

\begin{Abs}
  We study a class of dynamically constructed point processes in which
  at every step a new point (particle) is added to the current
  configuration with a distribution depending on the local structure
  around a uniformly chosen particle. This class covers, in
  particular, generalised Polya urn scheme, Dubbins--Freedman random
  measures and cooperative sequential adsorption models studied
  previously. Specifically, we address models where the distribution
  of a newly added particle is determined by the distance to the
  closest particle from the chosen one. We address boundedness of the
  processes and convergence properties of the corresponding sample
  measure. We show that in general the limiting measure is random when
  exists and that this is the case for a wide class of almost surely
  bounded processes.
\end{Abs}

\begin{Key}
  sequential adsorption, stopping set, point process, random measure,
  Polya urn, convergence of empirical measures
\end{Key}

\begin{AMS}
  primary 60G55;
  secondary 60G57, 60D05, 60F99, 82C22
\end{AMS}
\section{Introduction}
\label{sec:introduction}

A model of sequentially constructed point process that inspired this
paper was presented to one of the authors (SZ) by Richard W.~R.~Darling
as a way to describe a certain population dynamics. His original model
is described as follows. Start with a fixed finite configuration
$X=\{x_1\thru x_{n_0}\}$ of $n_0$ points in a plane. Call them
\emph{particles}. Choose one of these particles uniformly at
random. This particle, say $\xi$, is thought of as a `parent' of a new
particle that will be added to the current configuration according to
the following rule. Consider $k$ closest to $\xi$ particles
$x_1(\xi)\thru x_k(\xi)$, where $k\geq 3$ is a parameter of the model,
and fit a 2-variate Normal distribution centred in $\xi$ to these. Let
$\widehat{V}$ be the corresponding estimate of the covariance
matrix. Then sample a new particle from this estimated law:
$x_{n_0+1}\sim \mathcal{N}(\xi,\widehat{V})$. Once this is done, we
have a configuration of $n_0+1$ particles and we repeat the procedure
again: choose randomly a particle among all $n_0+1$ particles now
present, estimate the Normal law from the closest to it $k$ particles
and add a new particle sampled from this law, etc.

A realisation of the model based on 20 initial particles after 10
thousand steps is shown in the upper-left plot in Figure~\ref{fig:40-thousand-particles}.

\begin{figure}[ht]
  \centering
  \includegraphics[height=13cm]{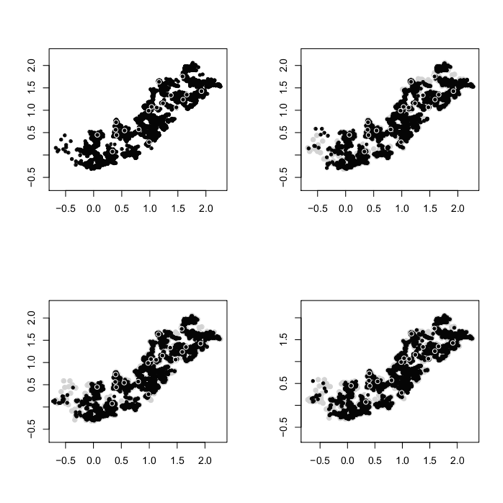}
  \caption{Sequence of 10, 20, 30 and 40 thousand generated
    particles. Newly added ten thousand particles are in dark, previously
    existing -- in grey (partially covered), initial particles are
    contoured void circles.}
  \label{fig:40-thousand-particles}
\end{figure}

One can note the following characteristic features of the
construction. Since the parent particle is chosen uniformly, there is a
greater chance that this parent will be chosen in the area densely
populated by the particles. Moreover, in these dense areas the
distance between the particles tends to be small, so the newly added
particle also tends to lie close to the parent point. So as the
construction progresses, it tends to reinforce the dense
areas of particles which kind of `adsorb' new particles. This is clearly seen in
Figure~\ref{fig:40-thousand-particles}, where the configuration is
shown after 10, 20, 30 and 40 thousand steps. Each of newly added 10
thousand particles are shown in dark emphasising their trend to follow
higher density areas of the previously existing (grey) points. Note that although the
configuration of existing particles plays a crucial role in the construction, only $k$
closest particles to the chosen one actually contribute to the
distribution of the added point. In this sense the interaction is
local, hence the name we have chosen for this process: Locally
Interacting Sequential Adsorption or \emph{LISA}, for short.

Another feature concerns the geometry of the cloud of particles. When
the parent particle is chosen inside a circular cloud, its closest
neighbours tend to be homogeneously spread around it. This produces
more or less isotropic Normal density for a new point wich adds to a
round cloud making it even more isotropic. In contrast, when a
boundary particle is chosen as a parent or when it lies in a stretched
cloud, the density will also be skewed in the corresponding
direction. So in the long run round clouds tend to stay round, but
time to time `shootouts' from their boundary happen which then tend to
produce filamentary arrangements. It also happens due to randomness
that even if a parent in such a filament is chosen, it can still produce a particle well outside
the main direction, and this would then become a centre of another
circular cloud.

There is a range of questions arising immediately: will the particles
be always confined to a bounded region or will the
diameter of the cloud will increase indefinitely? Will eventually
particles be present in any compact set of a positive area or will
there be gaps never filled by the process?
If we supply all $n$ the particles present at the current step with
masses $1/n$ we get a probability sample measure $\nu_n$. Is there a
limit in appropriate sense of the sequence of these measures? Is this
limit measure when exists random or is it non-random? And what about finer
properties of this limiting measure, like the Hausdorff dimension of its
support? 

Surely, the two-variate Normal distribution is just one of possible choices
of the distribution governing addition of a new point. And all the
above questions can be asked for any other distribution: for instance,
to provoke shootouts one would try some heavy-tailed distribution for
the distance from its centre. We, however, want to keep the main
essence of the local interaction of the model above requiring that the
new particle distribution scales appropriately when the configuration
becomes denser. This will bring us to the notion of a stopping set
described in details in the next section.

The structure of the paper is the following. In the next
Section~\ref{sec:gener-model-descr} we fix the notation used
throughout and give formal description of the class of locally
interacting sequential processes we are dealing with, the Darling's
model being one particular case of these. Other cases include such
seemingly different models as Dirichlet measures, Dubbins-Freedman's
random distribution functions and cooperative sequential
adsorption. Section~\ref{sec:rand-limit-distr} demonstrates on a
simple example that the limiting distribution of particles, if exists,
is generally a random measure, this particular example leads to
Dubbins-Freedman construction of a random distribution
function. Section~\ref{sec:bound-sequ-proc} addresses boundedness
issue and show that under rather mild conditions the cloud of points
has almost surely finite diameter. Finally,
Section~\ref{sec:lim-measure} studies convergence of sample measures
and shows that in models with an \as finite diameter such a limiting
measure exists in a weak sense almost surely. LISA processes constitute a very large class of models
with different properties, so we conclude by outlining extensions,
relations to other models and open problems which are abound.

\section{Preliminaries and Model Description}
\label{sec:gener-model-descr}

In order to define a locally interacting sequential adsorption process, we need
a few components.  First, the phase space $W$, where the particles live,
and an initial configuration $X_{n_0}=\{x_1\thru x_{n_0}\}$ of
particles in it which is a parameter of the model. Although a
generalisation is immediate, we assume in this paper that $W$ is a subset of
Euclidean space $\R^d$. It is often
convenient to treat a particle as a unit mass measure so that a
collection of particles is a counting measure on the Borel subsets of $W$.

As already alluded in Introduction, the local interaction, thought of
as a dependence of the distribution of the newly added particle on the local
configuration of particles around its parent, can be described in terms
of a stopping set which is the next component to be defined now.

Let $\mmm$ denote a set of Radon measures on the Borel sets $\bbb$ of
$\R^d$ and $\nnn\subset\mmm$ be the set of counting $\sigma$-finite
measures on $\bbb$. For a closed set $G\in\bbb$, let $\fff_G$ be the
\salg of subsets of $\mmm$ generated by the sets $\{\mu\in\mmm:\
\mu(B\cap G)\leq t\},\ B\in\bbb,\ t\geq0$ and let $\fff=\vee_B
\fff_G$, where $G$ runs through any countable system of bounded closed
sets generating $\bbb$. The system $\{\fff_G\}$ is a \emph{filtration},
because it possesses the following properties:
\begin{enumerate}
\item \emph{Monotonicity:} $\fff_{G'}\subseteq \fff_G$ whenever
  $G'\subseteq G$;
\item \emph{Continuity from above:} $\fff_G=\cap_{n} \fff_{G_n}$ for
  any sequence of closed nested sets: $G_1\supseteq G_2\supseteq\dots$
  such that $\cap_nG_n=G$.
\end{enumerate}
A \emph{random measure} (resp., a \emph{point process}) is a
measurable mapping from some probability space
to $[\mmm,\fff]$ (resp., to $[\nnn,\fff]$). A realisation of a point
process is called a configuration (of particles).

Denote by $\F$ the ensemble of all closed sets of $\R^d$ and by $\Xi$ the smallest
$\sigma$-algebra containing the sets $\{G\in \F:\ G\cap
K\neq\emptyset\}$ for all compact sets $K$. A \emph{random closed set} is a
measurable mapping from a probability space to $[\F,\Xi]$. We will be
working with the canonical space for the point processes when dealing
with random sets so they become a measurable functions of point configurations.

A \emph{stopping set} is a random closed set $S:\
[\nnn,\{\fff_G\},\P]\mapsto [\F,\Xi]$ such that the event
$\{S\subset G\}$ is $\fff_G$ measurable for any $G\in\F$. The
corresponding \emph{stopping \salg}$\fff_S$ consists of events
$E\in\fff$ such that $E\cap\{S\subset G\} \in \fff_G$ for any
$G\in\F$. A stopping set is a generalisation of the classical notion of
a stopping (or Markov) time: likewise a random process' trajectories
stopped at the Markov time, the geometry of a stopping set is
determined by the configuration of particle inside it and on its
boundary and does not depend on the particles outside of the stopping
set.

For more details on stopping sets in $\R^d$, see \cite{Zuy:99b}
and Appendix in~\cite{BauLast:09} covering also more general phase spaces.

Returning to the construction of LISA, secondly, for any point $x\in W$ and
all finite configurations $X$ with $n\geq n_0$ points there is defined
a stopping set $S_x(X\setminus\{x\})$ (by definition,
$X\setminus\{x\}=X$ if $x\not\in X$). In other words, if $X'$ is
another configuration such that $X'\cap S_x(X\setminus\{x\})=X\cap
S_x(X\setminus\{x\})$, then necessarily
$S_x(X'\setminus\{x\})=S_x(X\setminus\{x\})$. From now on, to ease
the notation, we will simply write $S(x,X)$ of just $S_x$ when no
confusion occurs instead of $S_x(X\setminus\{x\})$.

Finally, for every stopping set $S_x$ with the corresponding stopping
$\sigma$-algebra $\fff_{S_x}$ there is defined a random variable
$\zeta_{S_x}$, such that its distribution is defined only by the
geometry of the stopping set $S_x$ and the particles it contains. In
other words, $S_x$ can be viewed as a parameter of this distribution,
or if there are other natural parameters of this distribution, they
are necessarily $\fff_{S_x}$-measurable. Typically, for our purposes
$S_x$ and
$\zeta_{S_x}$ are defined to be shift invariant and scale homogeneous,
so that
\begin{align}
  \label{eq:hom}
  &S(x,X)\deq  x+S(0,X-x) & \zeta_{S(x,X)}&\deq  x+\zeta_{S(0,X-x)}\\
  & S(0,aX)\deq a\,S(0,X) & \zeta_{S(0,aX)}&\deq a\,\zeta_{S(0,X)}
\end{align}
for any positive $a$, configuration $X$ and $x\in W$ (\,$\deq$ denotes
equality in distribution).  In R.~Darling's model described in the
previous section, the stopping set $S(x,X)$ is the smallest closed
ball centred at $x\in X$ containing $k$ nearest neighbour particles of
$X\setminus\{x\}$ to $x$.  The covariance matrix $\widehat{V}$ estimated from
these particles (with or without $x$ itself) defines $\zeta_{S_x}$ as
having Multivariate Normal distribution $\mathrm{MVN}(x,\widehat{V})$ centred at
$x$. Since only the particles contained in $S_x$ are used to estimate
$V$, $\widehat{V}$ is $\fff_{S_x}$-measurable.

Having these necessary components at hand, we define a dynamical
procedure by which new particles are sequentially added to the
existing configuration one by one.  Let $\{\chi_n\}$ be a sequence of
independent random variables, where $\chi_n$ is uniformly distributed
on the discrete set $\{1,2\thru n\}$. Given current configuration
$X_n=\{x_1\thru x_n\}$ of $n\geq n_0$ particles, a new particle
$x_{n+1}$ distributed as $\zeta_{S(x_{\chi_n},X_n)}$ is added to the
configuration. In other words, a particle of $X_n$ is uniformly chosen
(so it is a particle with index $\chi_n$), and then a new particle is
added according to the distribution defined by its stopping set.

We now give examples of models which are constructed this way.

\begin{Exam}\label{ex:rnu}
  Let $W=[0,1],\ n_0=1$ and $X_{n_0}=\{0\}$. The stopping set $S_x$ is
  the segment from $x$ to the next particle to the right (or to 1 if
  there is no such particle). Formally, $S_x=S(x,X)=[x,x+d^+(x,X)]$,
  where $d^+(x,X)=\min \{x'-x:\ x'\in X\setminus\{x\}\cup\{1\},\
  x'>x\}$. Finally, $\zeta_{S_x}$ is a uniformly distributed
  point on $S_x$.
\end{Exam}

In the example above all the added particles belong to $W=[0,1]$ by
construction. In the next model the particles' range grows
indefinitely, but as we show in the next section, all the
particles will be confined to an almost sure bounded (but random) set.

\begin{Exam}\label{ex:clu}
  Let $W=\R$ and $X_{n_0}$ be some set of $n_0\geq 2$ particles. The
  stopping set $S_x=[x-d(x,X),x+d(x,X)]$, where
  $d(x,X)=\min\{|x-x'|\,:\ x'\in X\setminus\{x\}\}$ is the
  distance to the closest to $x$ particle of $X$. As in the previous model,
  $\zeta_{S_x}$ is uniformly distributed in $S_x$.
\end{Exam}

\begin{Exam}\label{ex:cln}
  This is one-dimensional variant of the model described in
  Introduction. Here $W,\ X_{n_0}$, $S_x$ and $d(x,X)$ are as in the
  previous example. But  $\zeta_{S_x}$ is Normally distributed with
  mean $x$ and standard deviation $a d(x,X)$ for some $a>0$.
\end{Exam}

\begin{Exam}\label{ex:cld}
  More generally, let $W=\R^d$, $S(x,X)$ be the closed ball centred in
  $x$ with radius $d(x,X)$ and $\zeta_{S_x}=x+d(x,X)\psi$
  with a given random vector $\psi\in\R^d$ whose distribution does not
  depend on anything. In the previous two
  examples, $d=1$ and $\psi$ is uniformly distributed in $[-1,1]$ in
  Example~\ref{ex:clu} or $\psi$ has normal $\mathcal{N}(0,a^2)$
  distribution in Example~\ref{ex:cln}.
  
\end{Exam}

In the next two examples, the distribution of the $n$th new particle to be
added does not depend on the index variable $\chi_n$, but rather on the whole
current configuration of the particles.

\begin{Exam}
  Let $W$ be some measurable space and $\mu$ be some given probability
  measure on its measurable subsets. Define $S(x,X)$ to be the whole
  $W$ for all $x$ and $X$. Random variable $\zeta_{S_x}$ equals $x$
  with probability $1-1/n$ and otherwise a random variable with
  distribution $\mu$ with probability $1/n$, where $n$ is the
  cardinality of $X$. Surely, the parameter $n$ of the distribution of
  $\zeta_{S_x}$ is $\fff_{S_x}=\fff_W$ measurable. Such defined LISA
  process describes the Blackwell--MacQueen construction which
  generalises the Polya urn scheme. It weakly converges to a Dirichlet
  random measure in the limit, see~\cite{BlaMcQ:73}.
\end{Exam}

\begin{Exam}\label{ex:csa} Let $W$ be some compact subset of $\R^n$
  and $\{\beta_n\}$ be a given sequence of positive numbers. Fix also
  a positive parameter $R$ called the \emph{interaction
    radius}. Define $S_x$ to be a closed ball $B(x,R)$ centred at $x$ with
  radius $R$ and $\zeta_{S(x,X)}$ to be the random variable
  with the density proportional to the function
  $f(x)=f(x,X)=\prod_{k=0}^{|X|} \beta_{n(x,X)}$, where $n(x,X)$ is
  the number of particles from $X$ belonging to $B(x,R)$. The
  corresponding LISA process then defines the so-called
  \emph{cooperative sequential adsorption} (CSA) model,
  see~\cite{Scherb:09,PenrScherb:09} and the references therein.
  
\end{Exam}

When the stopping set $S_x$ is allowed
to be the whole $W$, we are basically in the situation when the
distribution of the added particle depends on the whole current
configuration. Such construction may include just about any
dynamically constructed processes and is too general to be treated in
a unified manner. So to stay in the ``locally interacting'' framework,
we will concentrate in this paper only on LISA processes where the
distribution of $\zeta_{S_x}$ depends only on the distance $d(x,X)$
from $x$ to the (properly defined) closest particle among
$X\setminus\{x\}$, \ie on Examples~\ref{ex:rnu}--\ref{ex:cld}. The
original Darling's model which inspired this investigation does not
fall into this framework (unless $k=1$ in 1D case) and its detailed
analysis is still a hard open problem. But even the models we do
analyse here exhibit fascinating and different behaviours. These
concern, first of all, randomness of the limiting
distribution, boundedness of its support and its dimension.

\section{Random limiting distribution of LISA}
\label{sec:rand-limit-distr}

This section demonstrates that, in general, the limiting distribution
of LISA processes is non-degenerate. We show on Example~\ref{ex:rnu}
that the sample distributions functions $F_n(t)=n^{-1}\sum_{k=1}^n\one_{x_k\leq
  t},\ t\in[0,1]$ of the first $n$ particles converge to a random
distribution function on $[0,1]$ arising in the Dubbins-Freedman
construction, see~\cite{DubFre:66}. This fact has already been noted
in \cite[Sec.~5.2]{Pey:79}, but included here for a didactic purpose.

Recall the Dubbins--Freedman construction of a random measure with support
on $[0,1]$. A realisation of the cumulative distribution function
of such a measure is produced by the following sequential
procedure. Let $u_1=(\phi_1,\psi_1)$ be two independent uniformly
distributed in $[0,1]$ random variables, or, equivalently,
$u_1\sim \mathrm{Unif}([0,1]^2)$. The vertical and the
horizontal lines passing through this point divide the square
$[0,1]^2$ into four rectangles.  The distribution function being
constructed is deemed to pass through the points $(0,0),
(\phi_1,\psi_1)$ and $(1,1)$. Since the \cdf is a non-decreasing
function, it must be contained in the rectangles: $I_{1,1}=[0,\phi_1]\times
[0,\psi_1]$ and $I_{1,2}=[\phi_1,1]\times [\psi_1,1]$ lying along the `main'
diagonal from bottom left to top right. Namely, the \cdf passes
through a uniformly generated point
$u_{1,1}\sim
\mathrm{Unif}(I_{1,1})$ in the first rectangle and through a uniformly
generated point $u_{1,2}\sim \mathrm{Unif}(I_{1,2})$
in the second. Again, both these points divide the
corresponding rectangles $I_{1,1}$ and $I_{1,2}$ into 4 rectangles
each, the diagonal ones containing the \cdf In each 4 of these
diagonal rectangles of level 3 random uniform points are selected which
the \cdf is deemed to pass through, etc. Thus the \cdf is defined on a
everywhere dense set in $[0,1]$ and the values in all other
points are defined as the limits. Thus one obtains a continuous
increasing curve which is a random element on the space of
independent uniform random variables indexed by
a binary tree: $u_1, u_{1,1}, u_{1,2}, u_{1,1,1}, u_{1,1,2},
u_{1,2,1},\dots$.

Consider now the first particle $x_2$ generated in the construction
described in Example~\ref{ex:rnu}. It has $\mathrm{Unif}[0,1]$
distribution. Now an analogy with Polya urns can be drawn in the
following manner. Paint the particle $x_1=0$ black and the particle
$x_2$ white. The next generated particle $x_3$ will be black or white
according to whether it is the black $x_1$ or it is the white $x_2$ selected
on stage 3, \ie $\chi_2=1$ or $\chi_2=2$. Then the procedure repeats
with the colours of the particles being inherited from their `parent'
particles. So the number of black and white particles has the same
distribution as the number of black and white balls in a Polya urn with
starting configuration of one black and one white balls. But all black
particles are lying to the left of $x_2$ and all white are to the right.
So the proportion of the black particles is the proportion of
particles with coordinates less than $x_2$ which equals the proportion $y_2$
of black balls in the urn scheme which is $\mathrm{Beta}(1,1)$, or
equivalently, the uniform distribution on $[0,1]$. Thus the limiting
\cdf passes through the points $(x_2,y_2)$ having the same
distribution as $u_1$ in the Dubbins--Freedman construction.

Conditioning now on the value of the second `daughter' particle $x'$
of $x_1=0$, the proportion of all the particles to the left of it
conforms to $\mathrm{Unif}[0,y_2]$. Indeed, just ignore all the white
particles in the construction and distinguish among all `black'
particles the ones which are really black in $[0,x')$ and `dark grey'
which lie in $[x',x_2)$. So the \cdf passes through the point
distributed as $u_{1,1}$ above. Iterating to other segments, we
conclude the demonstration of the equivalence.

\section{Boundedness of LISA processes}
\label{sec:bound-sequ-proc}

Next natural question to be addressed is whether the limiting
distribution of particles in LISA, when it exists, has an
\as\ bounded support. This is trivially true for
Example~\ref{ex:rnu}, but it is not that evident in other
examples. Notice, that since the series $1/n$ diverges, each particle
will be chosen infinitely many times as a parent point. Thus the
rightmost particle present at stage $n$, for instance, in Example~\ref{ex:clu} will
eventually be chosen and with probability 1/2 will produce a
particle yet more to the right. So the support is growing, but will it
stay compact nevertheless?

\subsection{Boundedness in Example~\ref{ex:clu}.}
Recall Example~\ref{ex:clu}. Let $W=\R$,
$X_{n_0}=X_2=\{0,1\}$. It is convenient to slightly reformulate the rule 
by which new particles are added:
\begin{equation}\label{eq:xin}
  x_{n+1} = x_{\chi_{n}} + \eps_{n+1} d_{n} \eta_{n+1}\,,
\end{equation}
where $\{\eps_n\}$ is a sequence of \iid random variables equal to
$\pm 1$ with probability $1/2$, $\{\eta_n\}$ are \iid uniformly
distributed in $[0,1]$ and given $x=x_{\chi_n}$, $d_n=d(x,X_n) \in
\fff_{S(x,X_n)}$ for the stopping set
$S(x,X_n)=[x-d(x,X_n),x+d(x,X_n)]$.

\begin{Th}\label{prop:bounded1}
  Denote
  \begin{align*}
    m_n &=\min \{x:\ x \in X_n\}\,,\\
    M_n &=\max \{x:\ x \in X_n\}
  \end{align*}
  for the LISA model in Example~\ref{ex:clu}. Then
  \begin{equation}
    \label{eq:fin}
    \P\{-\infty<\liminf m_n\leq \limsup M_n < +\infty\}=1\,.
  \end{equation}
\end{Th}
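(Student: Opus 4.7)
The quantity $M_n$ is nondecreasing in $n$ (since $M_{n+1} = \max(M_n, x_{n+1})$), so $\limsup_n M_n = \sup_n M_n$, and to establish $\sup_n M_n < \infty$ a.s.\ it is enough, by monotone convergence and nonnegativity of the increments, to show
\[
\sum_{n\geq 2} \E[M_{n+1} - M_n] < \infty.
\]
The statement for $\liminf m_n$ then follows by applying the symmetric argument to $-m_n$.

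The key geometric observation is that $M_n$ strictly increases at step $n$ only when the parent $x_{\chi_n}$ coincides with the rightmost particle $M_n$ \emph{and} the sign variable $\eps_{n+1}$ equals $+1$. Indeed, if $x < M_n$, then because $M_n \in X_n \setminus \{x\}$ is a candidate neighbour, $d(x, X_n) \leq M_n - x$, so the offspring $x + \eps_{n+1}\eta_{n+1} d(x, X_n)$ lies in $[2x - M_n, M_n]$. Hence non-rightmost parents cannot extend $M_n$. In the exceptional event, which has conditional probability $\tfrac{1}{2n}$, we have $M_{n+1} - M_n = \eta_{n+1} D_n$, where
\[
D_n := M_n - \max\{x \in X_n : x < M_n\}
\]
denotes the gap between the two rightmost particles of $X_n$.

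The second ingredient is the decay of $\E D_n$. An inspection of the cases shows that $D_n$ is non-increasing: when the rightmost is chosen (probability $1/n$), $D_{n+1} = \eta_{n+1} D_n$ regardless of the value of $\eps_{n+1}$, while if any other particle is chosen, the offspring lies in $(-\infty, M_n]$ and the gap either stays equal to $D_n$ (when the offspring falls at or below $M_n - D_n$) or shrinks (when it lands in $(M_n - D_n, M_n)$). This gives
\[
\E[D_{n+1} \mid \fff_n] \leq \tfrac{1}{n}\cdot\tfrac{1}{2}D_n + \tfrac{n-1}{n} D_n = \bigl(1-\tfrac{1}{2n}\bigr)D_n,
\]
which iterates to $\E D_n \leq D_2 \prod_{k=2}^{n-1}(1 - 1/(2k)) = O(n^{-1/2})$ via the standard estimate $\log \prod \sim -\tfrac{1}{2}\log n$.

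Combining the two steps, $\E[M_{n+1} - M_n \mid \fff_n] = \tfrac{1}{2n}\cdot\tfrac{1}{2}D_n = D_n/(4n)$, so $\E[M_{n+1} - M_n] = O(n^{-3/2})$, which is summable. The main obstacle is the geometric reduction in the second paragraph: once one has seen that non-rightmost parents have offspring capped by $M_n$, the rest reduces to a routine supermartingale-type estimate for $D_n$ and a Fubini summation for $M_n$. I do not anticipate any additional technical difficulty.
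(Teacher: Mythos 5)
Your proof is correct, and it takes a genuinely different route from the paper's, although both rest on the same two geometric facts: the maximum can only jump when the current rightmost particle is chosen as parent with $\eps_{n+1}=+1$, and the relevant jump scale is the gap to the second rightmost particle, which can only shrink. The paper exploits these facts pathwise: it passes to the embedded chain $\tM_m=M_{\nu_m}$ at the (random) jump times, observes that the distance governing the $m$-th jump is at most the size of the $(m-1)$-th jump, and telescopes to the almost sure bound $\tM_m\leq 1+\sum_{l\leq m}\prod_{k\leq l}\eta_{\nu_k}$, whose expectation is a convergent geometric series because $\E\eta<1$. You instead stay in real time and argue entirely at the level of first moments: the contraction $\E[D_{n+1}\mid\fff_n]\leq(1-\tfrac1{2n})D_n$ gives $\E D_n=O(n^{-1/2})$, hence $\E[M_{n+1}-M_n]=O(n^{-3/2})$ is summable and $\sup_nM_n$ has finite mean. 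Your version buys an explicit rate, $\E[\sup_mM_m-M_n]=O(n^{-1/2})$, which the paper does not state; the paper's version buys a clean pathwise domination of the whole jump sequence by a product of i.i.d.\ variables, which is what its subsequent Remark reuses for general laws of $\eta$ on $(0,1]$ (your contraction argument adapts to that setting just as easily, with exponent $1-\E\eta$ in place of $1/2$). Both conclude via ``finite expectation of a monotone limit implies a.s.\ finiteness,'' and the symmetric treatment of $m_n$ is the same in both.
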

\begin{proof}
  We only prove that $\limsup M_n$ is finite \as A proof of finiteness
  of $\liminf m_n$ is similar. Introduce $\nu_m$ --- time of the
  $m$-th jump of the process $\{M_n\}$ as follows:
  \begin{align*}
    &\nu_0=n_0\,,\\
    &\nu_{m+1}=\min\{k: M_k>M_{\nu_{m}}\}, m=0,1,2,\dotsc
  \end{align*}
  Now consider imbedded process $\tM_m=M_{\nu_m}$. From
  \eqref{eq:xin} we obtain
  \begin{equation}\label{eq:max1}
    \tM_{m}= x_{\chi_{\nu_{m}-1}} + d_{\nu_{m}-1} \eps_{\nu_{m}}
    \eta_{\nu_{m}}, m=1,2,\dotsc
  \end{equation}
  The distribution of $\eta_n$ is concentrated on $(0,1]$, thus the
  maximum can only have $m$-th jump at time $n$ if
  $\chi_n=\nu_{m}$. That means, on the $(\nu_{m}{-}1)$-th step the
  $(m-1)$-th maximum is chosen, implying that
  $x_{\chi_{\nu_m-1}}=\tM_{m-1}$. Moreover, $\eps_{\nu_m-1}$ must be
  equal to $1$ in order for a positive jump to happen. Thus
  \eqref{eq:max1} is reduced to
  \begin{equation}
    \tM_m = \tM_{m-1} + d_{\nu_m} \eta_{\nu_m},\ \
    m=1,2,3,\dotsc
  \end{equation}
  Notice that $d_{\nu_m}=d_{\nu_m}(\tM_{m-1})$ is less or equal than
  $\tM_{m-1}-\tM_{m-2}$, and for that expression we have
  \begin{displaymath}
    \tM_{m-1}-\tM_{m-2}=d_{\nu_{m-1}}
    \eta_{\nu_{m-1}}, \ \ m=2,3,\dotsc
  \end{displaymath}
  By induction, one can get
  \begin{equation*}
  \tM_{m}-\tM_{m-1} \leq \prod_{k=1}^{m} \eta_{\nu_k},
  \end{equation*}
  so, since $\tM_0 = 1$, we come to a bound
  \begin{equation} \label{est:max}
    \tM_{m}\leq 1+ \sum_{l=1}^{m} \prod_{k=1}^{l} \eta_{\nu_k},
    m=1,2,3,\dotsc
  \end{equation}
  Notice now that $\{\eta_{\nu_k}\}_{k\geq 1}$ and $\{\eta_k\}_{k\geq 1}$
  are equally
  distributed, since by the definition of $\nu_m$,
  $n_0=\nu_1<\nu_2<\nu_3<\dotsc,$ and $\{\eta_k\}$ are independent of
  $\{\nu_m\}$. The sequence $\{\tM_{m}\}$ is monotonely increasing,
  therefore it has an \as\ limit $\tM_\infty$, although possibly
  infinite. However, since $\E\eta_n<1$,
  \begin{displaymath}
    \E\lim_{m\to\infty}\tM_m = \lim_{m\to\infty}\E\tM_m < \infty,
  \end{displaymath}
 in particular, $\tM_\infty<\infty$ \as\ thus
  finishing the proof.
\end{proof}

\begin{Rem}
  Same proof with minor tweaks works for any initial configuration
  $X_{n_0} = \{x_1, x_2, \ldots, x_{n_0}\}$ and even for $\eta$ in the
  scheme~\eqref{eq:xin} distributed with an arbitrary law $F$
  concentrated on $(0,1]$ with $\P(\eta_n<1)>0$ (to guarantee a
  convergence of the sum below).
  Estimate~\eqref{est:max} turns into the following:
  \begin{displaymath}
    \tM_{m}\leq d_{n_0}(\tM_0)\Big(\tM_0 + \sum_{l=1}^{m}
    \prod_{k=1}^{l} \eta_{\nu_k}\Big),
    m=1,2,3,\dotsc
  \end{displaymath}
  Here $\tM_0$ and $d_{n_0}(\tM_0)$ are constants dependent on
  $X_{n_0}$ only.
\end{Rem}

\begin{Rem}
  Dubbins--Freedman random distribution functions $F(t)$ arising in
  Example~\ref{ex:rnu} are almost surely continuous, but also each
  point $t\in [0,1]$ is almost surely a point of growth of the
  distribution function $F(t)$, \ie for any $\epsilon>0$ there exist
  $t'\in (t-\epsilon,t)$ and $t''\in (t,t+\epsilon)$ such that
  $F(t')<F(t)<F(t'')$. In contrast, Example~\ref{ex:clu} provides
  random distribution functions which are continuous, but also contain
  constant regions, \ie the corresponding limiting measure does not
  have connected support. To see this, observe that with positive
  probability there happen to be a configuration of points generated
  by the algorithm of Example~\ref{ex:clu} where there are 2 pairs of
  points: $x_1< x_2< y_1< y_2$ each pair consisting of closely
  situated points separated by a relatively large void, \ie
  $x_2-x_1, y_2-y_1$ are small but $(y_1-x_2)/(x_2-x_1)$ and
  $(y_1-x_2)/(y_2-y_1)$ are large. The construction of new points
  scales with distance, so that the evolution of the initially present
  pair of points at distance $\delta$ has the same distribution as the
  evolution of two initial points at the distance 1 scaled by factor
  $\delta$.  Therefore according to just proven
  Proposition~\ref{prop:bounded1} with a positive probability the
  maximum of all the offsprings of the pair $x_1, x_2$ (affected only
  by the distance $x_2-x_2$) will be strictly smaller than the minimum
  of all the offspring of the pair $y_1, y_2$ (based only on
  $y_2-y_1$) so that there will be a void somewhere between $x_2$ and
  $y_1$ not filled with any points.
\end{Rem}

\subsection{Boundedness in Example~\ref{ex:cld}.}
Important feature of the model considered in the previous section is
that the particles cannot jump over each other and thus the influence
of a new added particles can be effectively controlled. This is now
longer the case in Example~\ref{ex:cld} where the farthest particle
can be potentially generated by any parent point. In this subsection
we present sufficient conditions, under which the more general
$d$-dimensional LISA process from Example~\ref{ex:cld} is bounded \as

Let $W=\R^d$,\ $d\in\N$. Initial configuration is given by
$X_0=\{x_1,x_2,\dotsc,x_{n_0}\}$.  New particles are added
according to the rule:
\begin{equation}\label{eq:xin}
  x_{n+1} = x_{\chi_{n}} + d_{n}(x_{\chi_{n}})\, \psi_{n+1},\ n=n_0, n_0+1, \dotsc
\end{equation} 
As before, $\chi_n$ are independent, distributed uniformly on
$\{0,1,\dotsc,n\}$, $d_n(x)$ is the distance from $x$ to
$X_{n}{\setminus}\{x\}$, $\psi_n$ are \iid random variables with a
given distribution which may now have a non-compact support. Next, we
set $\eta_n=\|\psi_n\|$ and denote $C=\E\eta_1$.  Also, put $\heta_n =
\eta_n {\wedge} 1 = \min\{\eta_n, 1\}$ with corresponding
$\hat{C}=\E\heta_1$.

\begin{Lem}\label{lem:markov}
  Let $\{\eta_n\}$, $\{\phi_n\}$ be \iid sequences of non-negative random
  variables, independent between themselves. Put $\heta_n=\eta_n\wedge 1$,
  $\theta_n = \eta_n \prod_{i=1}^{n-1}\heta_i$. Let also $C=\E\eta_1$ and
  $\E\phi_1$ be finite. Put $\hat{C}=\E\heta_1$. Assume $\hat{C}<1$. Define
  $$
    Y_n= \bigvee_{i=1}^{n}\theta_i(\phi_i+1)
  $$
  Then $Y_{\infty} = \max\limits_n Y_n <\infty$ \as Moreover,
  \begin{equation}\label{eq:est}
    \E Y_{\infty} \leq \frac{C}{1-\hat{C}}(1+\E\phi_1)
  \end{equation}
\end{Lem}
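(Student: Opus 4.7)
The plan is to dominate the maximum by the corresponding infinite sum and then exploit independence together with the product structure of $\theta_i$. Specifically, since all quantities are non-negative,
\[
  Y_\infty \;=\; \sup_{i\ge 1}\theta_i(\phi_i+1) \;\le\; \sum_{i=1}^\infty \theta_i(\phi_i+1),
\]
so it suffices to show the latter series has finite expectation.

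Next I would compute $\E\theta_i$. Because $\theta_i=\eta_i\prod_{j=1}^{i-1}\heta_j$ and the $\eta_j$'s are \iid, independence gives $\E\theta_i=\E\eta_1\cdot(\E\heta_1)^{i-1}=C\,\hat C^{\,i-1}$. Moreover, $\theta_i$ depends only on $\eta_1,\ldots,\eta_i$ while $\phi_i$ is independent of the $\eta$-sequence, so $\theta_i$ and $\phi_i+1$ are independent, giving $\E[\theta_i(\phi_i+1)]=C\hat C^{\,i-1}(1+\E\phi_1)$. Summing, using $\hat C<1$,
\[
  \E\sum_{i=1}^\infty \theta_i(\phi_i+1)
  \;=\;(1+\E\phi_1)\,C\sum_{i=1}^\infty \hat C^{\,i-1}
  \;=\;\frac{C(1+\E\phi_1)}{1-\hat C}<\infty.
\]

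Finiteness of this expectation forces the series to converge almost surely, hence $Y_\infty<\infty$ \as; and by monotone convergence $\E Y_\infty=\lim_n \E Y_n$ is bounded by the same quantity, which is exactly the estimate \eqref{eq:est}. There is no real obstacle here, the only point that deserves care is verifying the independence between $\theta_i$ and $\phi_i+1$ (and hence the factorisation of the expectation), which follows at once from the assumption that the sequences $\{\eta_n\}$ and $\{\phi_n\}$ are independent of each other together with the \iid property within each sequence.
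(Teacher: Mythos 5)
Your proposal is correct and follows essentially the same route as the paper: bound the maximum by the sum, factor the expectation of $\theta_i(\phi_i+1)$ using independence to get $C\hat{C}^{\,i-1}(1+\E\phi_1)$, and sum the geometric series. The only cosmetic difference is that the paper bounds $\E Y_n$ by the partial sums and invokes monotonicity, whereas you pass directly to the infinite series; the content is identical.
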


\begin{proof}
  Note that $Y_n$ are monotone and thus converge to some (possibly infinite)
  limit. However, one can show that $Y_n$ are bounded in $\lll^1$:
  $$
    \E Y_n= \E \bigvee_{i=1}^n \theta_i(\phi_i+1) \leq \sum_{i=1}^n \E
    \theta_i (\phi_i+1) = \sum_{i=1}^n \E\eta_i\prod_{j=1}^{i-1}
    \E\heta_j (\E\phi_i+1) =
  $$
  $$
    \sum_{i=1}^n C(\E\phi_i+1)\hat{C}^{i-1} = C(\E\phi_1+1)
    \sum_{k=0}^{n-1}\hat{C}^k<\frac{C\E\phi_1+1}{1-\hat{C}}.
  $$
  Hence $Y_\infty$ has finite expectation with a correspondent bound.
\end{proof}

\begin{Th}\label{th:2}
  If $C+\hat{C}<1$ then $\sup\limits_n|\xi_n|<\infty$ \as Moreover,
  $$
    \E\sup_n|\xi_n| \leq A_0+\frac{n_0 D_0 C}{1-\hat{C}-C}\,,
  $$
 for some constants $A_0, D_0$ depending only on the initial
  configuration $X_{n_0}$.
\end{Th}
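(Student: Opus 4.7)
The idea is to unwind the LISA dynamics along the genealogical forest it induces: for $n > n_0$ declare $x_{\chi_{n-1}}$ to be the parent of $x_n$, so that iterating, every $x_n$ has a unique root $r(x_n) \in X_{n_0}$. Set $A_0 = \max_{i \leq n_0}|x_i|$ and $D_0 = \max_{i \leq n_0}d_{n_0}(x_i)$; both are determined by the initial configuration and will serve as the constants in the theorem.

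\paragraph{Geometric path bound.} For a chain $r = P_0, P_1, \ldots, P_k = P$ with $\eta$-values $\eta^{(1)},\ldots,\eta^{(k)}$ used when $P_1,\ldots,P_k$ are born, I claim
\[
|P - r| \leq D_0 \sum_{j=1}^k \prod_{i=1}^j \eta^{(i)}.
\]
Let $\alpha_j$ be the nearest-neighbour distance at $P_{j-1}$ just before $P_j$ is born. For $j = 1$ we have $\alpha_1 \leq d_{n_0}(r) \leq D_0$ since $P_0$ is a root; for $j \geq 2$, $P_{j-2}$ is already present and at distance $|P_{j-1}-P_{j-2}|$ from $P_{j-1}$, whence $\alpha_j \leq \alpha_{j-1}\eta^{(j-1)}$. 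Induction yields $\alpha_j \leq D_0 \prod_{i<j}\eta^{(i)}$, hence the displacement creating $P_j$ is at most $D_0 \prod_{i\leq j}\eta^{(i)}$; the triangle inequality then gives the displayed inequality.

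\paragraph{Per-root reduction.} Since $|x_n| \leq A_0 + |x_n - r(x_n)|$ and a maximum over the $n_0$ roots is dominated by the sum,
\[
\sup_n |x_n| \leq A_0 + \sum_{r \in X_{n_0}} J(r), \qquad J(r) := \sup_{P \text{ descendant of } r}|P-r|,
\]
so it suffices to prove $\E J(r) \leq D_0\, C/(1 - \hat C - C)$ for every root $r$, after which summing the $n_0$ contributions yields the claimed bound.

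\paragraph{Application of Lemma~\ref{lem:markov}.} Enumerate the descendants of $r$ in birth order; the associated $\eta$-values $\eta_1,\eta_2,\ldots$ are i.i.d.\ by the independence of $\{\chi_n\}$ from $\{\psi_n\}$. The partial path products entering the bound for $J(r)/D_0$ must be majorised by a sequence of the form $\theta_i(\phi_i+1)$ with $\theta_i = \eta_i\prod_{j<i}\hat\eta_j$, matching the hypotheses of Lemma~\ref{lem:markov}. Writing $\eta = \hat\eta + (\eta - 1)_+$, the bulk factors $\hat\eta_j$ along the prefix of each chain reproduce the truncated product in $\theta_i$, while the excursions $(\eta-1)_+$ together with the still-to-come contribution of the sub-subtree past the $i$-th descendant are absorbed into the auxiliary random variable $\phi_i$. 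The target expectation is a fixed point of this reduction: the choice $\E\phi = C/(1 - \hat C - C)$ makes the Lemma's conclusion $C(1 + \E\phi)/(1-\hat C)$ equal to $C/(1 - \hat C - C)$, and the assumption $C + \hat C < 1$ is precisely what makes this fixed point positive and the iteration contractive.

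\paragraph{Main obstacle.} The technical heart of the proof is the rigorous identification of $(\theta_i,\phi_i)$ meeting the independence hypothesis of Lemma~\ref{lem:markov}: within a single chain the $\eta_i$ are independent, but distinct chains share ancestral prefixes, and the random tree shape (though independent of the $\eta$'s) couples the suprema across generations and across roots. Once this identification is carried out, a.s.\ finiteness of $Y_\infty$ in Lemma~\ref{lem:markov} transfers to $\sup_n |x_n| < \infty$ a.s., and its expectation estimate, summed over the $n_0$ roots, produces the quantitative bound asserted in the theorem.
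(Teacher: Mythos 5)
Your skeleton matches the paper's: a genealogical forest rooted at $X_{n_0}$, repeated use of Lemma~\ref{lem:markov}, the fixed-point constant $C/(1-\hat{C}-C)$, and a final sum over the $n_0$ roots. But there is a genuine gap, and it sits exactly where you flag your ``main obstacle''. Your geometric path bound controls the displacement along a single ancestral chain using the \emph{untruncated} factors $\eta^{(i)}$, with $\alpha_1\leq D_0$ uniformly for every child of the root. This cannot control the supremum over the tree: each particle produces infinitely many children almost surely, so already for the first generation your bound gives $\sup_m|P^{(m)}-r|\leq D_0\sup_m\eta^{(m)}$, which is the essential supremum of $\eta$ and hence infinite whenever $\|\psi\|$ is unbounded (e.g.\ the Gaussian case of Example~\ref{ex:cln}). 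The indispensable estimate, which your argument never produces, is the \emph{sibling decay}: each time a mother gives birth, her nearest-neighbour distance gets multiplied by at most $1\wedge|\psi|=\hat\eta$, because the newborn sits at distance $d|\psi|$ and the new nearest-neighbour distance is $d\wedge d|\psi|=d\hat\eta$. Hence the $i_1$-th child of $x_{i_0}$ is born with $d_{i_0i_1}(x_{i_0})\leq d_{i_0}\prod_{j<i_1}\hat\eta_{i_0j}$. It is these products over \emph{older sisters} --- not over ``the prefix of each chain'' --- that create the $\theta_i=\eta_i\prod_{j<i}\hat\eta_j$ required by Lemma~\ref{lem:markov}; your proposed decomposition $\eta=\hat\eta+(\eta-1)_+$ does not generate them and plays no role in the actual mechanism.

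The second missing piece is how to pass from one generation to the next despite the dependence you correctly worry about. The paper resolves it by a recursion rather than by constructing a single global majorant: writing $L^{i_0}_1=\bigvee_{i_1}\theta_{i_0i_1}$ for the (normalised) first-generation supremum, one bounds the $n$-th generation by $L^{i_0}_n=\bigvee_{i_1}\theta_{i_0i_1}\bigl(1+L^{(i_1)}_{n-1}\bigr)$, where the $L^{(i_1)}_{n-1}$ are i.i.d.\ copies of $L^{i_0}_{n-1}$ attached to the disjoint subtrees of the first-generation children and independent of $\{\theta_{i_0i_1}\}_{i_1}$; Lemma~\ref{lem:markov} applied at each level with $\phi_{i_1}=L^{(i_1)}_{n-1}$ gives $\E L^{i_0}_n\leq\sum_{k=1}^n\bigl(C/(1-\hat{C})\bigr)^k$, and monotone convergence finishes the argument. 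Your fixed-point observation identifies the correct limit of this geometric series, but without the sibling-decay estimate and the generation recursion the proof does not go through; as written, the proposal defers precisely the content of the theorem to an unperformed ``identification of $(\theta_i,\phi_i)$''.
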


\begin{proof}
  First of all, organize $\{x_n\}$ in a tree in the following natural way:
  for every
  point $x_{i_0}$ from $X_{n_0}$ denote all the points $x_n$ such
  that $\chi_{n-1}=i_0$ as $\{x_{i_0i_1}\}_{i_1=1}^\infty$, in the order
  of appearance. We will further say that $x_{i_0i_1}$ are the children of
  $x_{i_0}$. Then all the children of $x_{i_0i_1}$ we
  denote by $\{x_{i_0i_1i_2}\}_{i_2=1}^\infty$, and so on. 

  Let $\nu(i_0\dotsc i_k)$ denote the (random) time of appearance of
  $x_{i_0\dotsc i_k}$. Let also $\psi_{i_0\dotsc i_k} :=
  \psi_{\nu(i_0\dotsc i_k)}$, $\eta_{i_0\dotsc i_k} :=
  \eta_{\nu(i_0\dotsc i_k)}$, $d_{i_0\dotsc i_k}(\cdot) := d_{\nu(i_0\dotsc i_k)-1}(\cdot)$. Observe that
  $\{\psi_{i_0\dotsc i_k}\}_{k,i_0,\dotsc,i_k\in\N}$ and hence 
  $\{\eta_{i_0\dotsc i_k}\}_{k,i_0,\dotsc,i_k\in\N}$ are \iid
  families.

  Fix $i_0\leq n_0$ for now. We have in our new notation:
  \beq\label{est:i0}
    \bigvee_{i_1}|x_{i_0i_1}-x_{i_0}| = \bigvee_{i_1} |x_{i_0}
    + d_{i_0i_1}(x_{i_0})\psi_{i_0i_1} - x_{i_0}| =
    \bigvee_{i_1}|d_{i_0i_1}(x_{i_0})\psi_{i_0i_1}|
  \eeq
  Estimate $d_{i_0i_1}(x_{i_0})$, $i_1=1,2,\dotsc$ Recall that
  it is a distance to the closest neighbour, hence it can not be larger
  than distance to the points that already exist for sure at the moment
  of $x_{i_0i_1}$'s appearance, that is, its mother $x_{i_0}$ and all of
  the older sisters $x_{i_0 1},x_{i_0 2},\dotsc,x_{i_0,i_1-1}$. We can write:
  \begin{align}
    d_{i_0 1}(x_{i_0}) &\leq d_{n_0}(x_{i_0}) =:d_{i_0} \nonumber\\
    d_{i_0 2}(x_{i_0}) &\leq d_{i_0 1}(x_{i_0})\wedge|x_{i_0 1}-x_{i_0}|
    \leq d_{i_0 1}(x_{i_0})\wedge|d_{i_0 1}(x_{i_0})\psi_{i_0 1}| \nonumber\\
    &\leq d_{i_0 1}(x_{i_0})(1\wedge|\psi_{i_0 1}|) \leq d_{i_0}\heta_{i_0 1}\nonumber\\
    d_{i_0 3}(x_{i_0}) &\leq d_{i_0 2}(x_{i_0}) \wedge |x_{i_0 2}-x_{i_0}| \leq
    d_{i_0 2}(x_{i_0}) \wedge |d_{i_0 2}(x_{i_0}) \psi_{i_0 2}|\nonumber\\
    &\leq d_{i_0 2}(x_{i_0})(1\wedge|\psi_{i_0 2}|) \leq d_{i_0}\heta_{i_0 1}\heta_{i_0 2}\nonumber\\
    \vdots &\nonumber\\
    d_{i_0i_1}(x_{i_0}) &\leq d_{i_0}\prod_{j=1}^{i_1-1}\heta_{i_0j}\label{est:d}\\
    \vdots &\nonumber
  \end{align}

  Introduce $\theta_{i_0\dotsc i_k}:=\eta_{i_0\dotsc i_{k-1}i_k}\prod
  \limits_{j=1}^{i_k-1}\heta_{i_0\dotsc i_{k-1} j}$.
  Using~\eqref{est:d} we can estimate the right part of~\eqref{est:i0}:
  \begin{align*}
    \bigvee_{i_1}|d_{i_0i_1}(x_{i_0})\psi_{i_0i_1}| &\leq
    d_{i_0}\bigvee_{i_1}|\psi_{i_0i_1}|\prod_{j=1}^{i_1-1}\heta_{i_0j}
    = d_{i_0}\bigvee_{i_1}\eta_{i_0i_1}\prod_{j=1}^{i_1-1}\heta_{i_0j}\\
    &= d_{i_0}\bigvee_{i_1}\theta_{i_0i_1} =: d_{i_0}L^{i_0}_1
  \end{align*}
  Apply Lemma~\ref{lem:markov} with $\theta_n=\theta_{i_0 n}$, $\phi_n=0$,
  $n=1,2,\dotsc$ to see that $L^{i_0}_1$ is a proper random variable with
  $\E L^{i_0}_1\leq \frac{C}{1-\hat{C}}$.

  Now estimate the second generation.
  \begin{align}
    \bigvee_{i_1,i_2}|x_{i_0i_1i_2}-x_{i_0}| &\leq \bigvee_{i_1}
    \left(|x_{i_0i_1}-x_{i_0}|+ \bigvee_{i_2}|x_{i_0i_1i_2}
    -x_{i_0i_1}|\right)\nonumber\\ &\leq \bigvee_{i_1}\left(
    |x_{i_0i_1}-x_{i_0}| + \bigvee_{i_2}d_{i_0i_1i_2}
    (x_{i_0i_1})\eta_{i_0i_1i_2}\right)\label{est:i2}
  \end{align}
  Note that
  $$
    d_{i_0i_1i_2}(x_{i_0i_1}) \leq d_{i_0i_1 1}(x_{i_0i_1})
    \prod_{j=1}^{i_2-1}\heta_{i_0i_1j}\leq|x_{i_0i_1}-x_{i_0}|
    \prod_{j=1}^{i_2-1}\heta_{i_0i_1j}
  $$
  and therefore we continue~\eqref{est:i2}:
  \begin{align*}
    &\leq\bigvee_{i_1}\left(|x_{i_0i_1}-x_{i_0}|\left(1+\bigvee_{i_2}
    \eta_{i_0i_1i_2}\prod_{j=1}^{i_2-1}\heta_{i_0i_1j}\right)\right)\\ &\leq
    d_{i_0}\bigvee_{i_1}\left(\theta_{i_0i_1}\left(1+\bigvee_{i_2}
    \theta_{i_0i_1i_2}\right)\right)=:d_{i_0}L^{i_0}_2
  \end{align*}
  Note that $\{\bigvee\limits_{i_2}\theta_{i_0i_1i_2}\}_{i_1=1}^\infty$ is
  an i.i.d. sequence, distributed like $L^{i_0}_1$ and independent of
  $\{\theta_{i_0i_1}\}_{i_1=1}^{\infty}$. Using Lemma~\ref{lem:markov}
  again, we obtain $L^{i_0}_2<\infty$ \as, and moreover,
  $$
  \E L^{i_0}_2 \leq \frac{C}{1-\hat{C}} + \left(\frac{C}{1-
  \hat{C}}\right)^2
  $$

  Repeating that argument for $n=3,4,\dotsc$ we obtain a monotone sequence
  $\{L^{i_0}_n\}_{n=1}^\infty$, where $d_{i_0}L^{i_0}_n$ is an \as bound for
  elements from $n$-th generation of descendants of the point $x_{i_0}$,
  $n=1,2,\dotsc$ Since $\{L^{i_0}_n\}$ is monotone and is bounded in $\lll^1$:
  $$
    \E L^{i_0}_n \leq \sum_{i=1}^n\left(\frac{C}{1-\hat{C}}\right)^i
    < \frac{C}{1-C-\hat{C}}
  $$
  there exists an \as limit $L^{i_0}_\infty := \lim\limits_{n\to\infty}L^{i_0}_n$
  with $\E L^{i_0}_\infty \leq \frac{C}{1-C-\hat{C}}$.
  We finish the proof by recalling arbitrariness of $i_0$:
  $$
    \E\sup_{n\geq 1}|x_n| \leq \sup_{i_0\leq n_0}|x_{i_0}|+
    \E\sup_n\sup_{i_0,\dotsc,i_n}|x_{i_0\dotsc i_n}-x_{i_0}|\leq
    A_0 + \frac{n_0 D_0 C}{1-\hat{C}-C}
  $$
  Here $A_0=\sup\limits_{i_0\leq n_0}|x_{i_0}|$, $D_0=\sup\limits_{i_0
  \leq n_0} d_{n_0}(x_{i_0})$
\end{proof}

As an illustration, the 1D Darling's model in Example~\ref{ex:cln} is bounded if
$a<0.8239$ which is a value obtained numerically.

\begin{Rem}\label{rem:conveq}
  The bound in~\eqref{eq:est} and therefore the condition $C+\hat{C} <1$
  may be improved if one could find "nicer" conditions sufficient for
  $$
  \E H = \E \max_{k\geq0} (\eta_k \heta_1\heta_2 \ldots \heta_{k-1}) < 1
  $$
  for \iid non-negative $\eta_k$. We demonstrate that for a particular
  distribution of $\eta_k$. If we introduce $\zeta_k = \log \eta_k$, with
  the distribution function $F(t)$, then the calculation shows that
  $G(t) = \P(\log H < t)$ must satisfy the integral equation:
  \begin{equation}\label{eq:int-eq}
    G(t) = \begin{cases}
      \int_{-\infty}^0 G(t-z)\,dF(z)+(F(t)-F(0))G(t),& t\geq 0\\
      \int_{-\infty}^t G(t-z)\,dF(z),& t<0.
    \end{cases}
  \end{equation}
  Let $\eta_k$ have the following distribution:
  $$
    \P(\eta_k<y)=\begin{cases}
      0, & y<0,\\
      (1-p)y^\beta, & 0\leq y<1,\\
      1-py^{-\alpha}, & y\geq 1.
    \end{cases}
  $$
  for some $\alpha>1,\beta>0,0<p<1$.
  In that case, $\zeta_k=-\log\eta_k$ has \cdf
  $$
    F(t)=\begin{cases}
      (1-p)e^{\beta t},& t\leq 0,\\
      1-pe^{-\alpha t},& t>0,
    \end{cases}
  $$
  and one can directly obtain the solution for~\eqref{eq:int-eq},
  $$
    G(t)=\begin{cases}
      (\frac{p}{1-p}+1)^{-1-\frac{\beta}{\alpha}}e^{\beta 1},&t<0,\\
      (\frac{p}{1-p}e^{-\alpha t+1})^{-1-\frac{\beta}{\alpha}},&t\geq 0
    \end{cases}
  $$
  leading to the following distribution for $H$:
  $$
    \P(H<x)=\begin{cases}
      0, & x<0,\\
      (\frac{p}{1-p}+1)^{-1-\frac{\beta}{\alpha}}x^\beta,&0\leq x<1,\\
      (\frac{p}{1-p}x^{-\alpha}+1)^{-1-\frac{\beta}{\alpha}},&x\geq 1.
    \end{cases}
  $$
  It is not possible to find an analytic form for $\E H$ in that
  generality, but if we fix $\beta$ to be, say, $1/4$, then we can
  find the regions where $\E H<1$ and $\E \eta+\E\heta<1$ numerically:
  \center\includegraphics[scale=0.4]{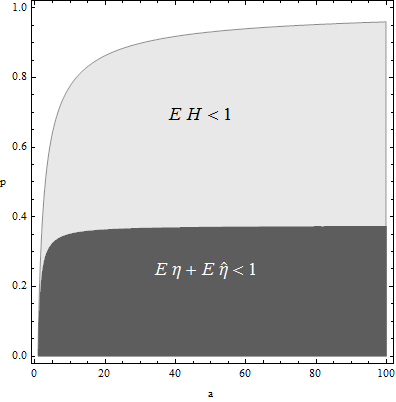}
  
  So, as we see, the condition $\E\eta+\E\heta<1$ of
  Theorem~\ref{th:2} is far from being tight for boundedness.
\end{Rem}

\section{Properties of the limiting measure}
\label{sec:lim-measure}

In previous sections we addressed the boundedness of the series of
point configurations $X_n$. In this section we study the limiting
sample measure and its properties. We are still working with most
general model of Example~\ref{ex:cld} in the phase space
$W=\R^d$,~$d\geq 1$ and initial
configuration $X_{n_0}=\{x_1,\dotsc,x_{n_0}\}$. New particles are added
according to the rule:
\begin{displaymath}
  x_{n+1} = x_{\chi_n}+d_n(x_{\chi_n})\,\psi_n,\ \ n=n{+}1,n{+}2,\dotsc
\end{displaymath}
so that $X_{n+1}=X_n\cup \{x_n\}$. Again, $\chi_n$ are independent,
uniformly distributed over $\{1,2,\dotsc,n\}$, $\psi_n$ are \iid
random variables distributed according to a given probability measure
$\mu$, $d_n(x)$ is, as before, a distance from $x$ to its closest
neighbour in $X_n\setminus\{x\}$. Denote by
$$
  \mu_n=\frac{1}{n}\sum_{k=1}^{n}\mu_{n,k} = \frac{1}{n}\sum_{k=1}^{n}
  \mu(d_n^{-1}(\ \cdot\ -x_k))
$$
the distribution of $x_{n+1}$, and denote by
$$
  \nu_n=\frac{1}{n}\sum_{k=1}^n \delta_{x_k}
$$
the empirical measure of the process $X_n$ after $n$ steps.

We will start with a short lemma providing some insight on the behaviour
of $d_n(x)$.

\begin{Lem}
Assume that $\bigcup\limits_n X_n=X_\infty$ is \as bounded and
$\P(|\psi_1|<1)>0$. Then 
$$\lim\limits_{n\to\infty}\max_{1\leq k\leq n}d_n(x_k)=0$$
\end{Lem}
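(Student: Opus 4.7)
The plan is to argue by contradiction. Suppose the conclusion fails; then, on a set of positive probability, there exist $\epsilon>0$ and sequences $n_j\uparrow\infty$, $k_j\leq n_j$, such that $d_{n_j}(x_{k_j})\geq\epsilon$. Equivalently, the open ball $B(x_{k_j},\epsilon)$ contains no point of $X_{n_j}$ other than $x_{k_j}$. Because $d_n(x_k)$ is non-increasing in $n\geq k$ (enlarging the configuration can only decrease the nearest-neighbour distance), once $d_n(x_k)$ drops below $\epsilon$ it stays below. I would then split according to whether the index sequence $(k_j)$ is bounded.

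If $(k_j)$ is bounded, some value $k^*$ is attained by $k_j$ infinitely often, so monotonicity of $d_n(x_{k^*})$ forces $d_n(x_{k^*})\geq\epsilon$ for every $n\geq k^*$; that is, $x_{k^*}$ is $\epsilon$-isolated forever. Now apply the second Borel--Cantelli lemma to the independent events $\{\chi_n=k^*\}$, each of probability $1/n$: almost surely $x_{k^*}$ is chosen as a parent at infinitely many steps $n_1<n_2<\dotsc$. At each such step, the new particle sits at distance $d_{n_\ell}(x_{k^*})\,|\psi_{n_\ell+1}|$ from $x_{k^*}$, so
\begin{equation*}
d_{n_\ell+1}(x_{k^*})\leq d_{n_\ell}(x_{k^*})\,\heta_{n_\ell+1},\qquad \heta:=1\wedge|\psi|.
\end{equation*}
Iterating yields $d_n(x_{k^*})\leq d_{k^*}(x_{k^*})\prod_{i=1}^{L_n}\heta^{(i)}$, with $\heta^{(i)}$ i.i.d. copies of $\heta$ and $L_n\to\infty$ a.s. Since $\P(|\psi|<1)>0$ implies $\P(\heta<1)>0$ and $\heta\leq 1$, we have $\E\log\heta<0$ (possibly $-\infty$), so by the strong law the product tends to $0$ almost surely, contradicting $d_n(x_{k^*})\geq\epsilon$.

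If $k_j\to\infty$, the argument is purely topological and uses the hypothesis that $X_\infty$ is almost surely bounded. By (sequential) compactness of the closure of $X_\infty$, extract a further subsequence along which $x_{k_j}\to x^*$. For any fixed index $i$, as soon as $j$ is large enough that $n_j\geq i$ and $k_j>i$, the $\epsilon$-isolation of $x_{k_j}$ in $X_{n_j}$ gives $|x_{k_j}-x_i|\geq\epsilon$; letting $j\to\infty$ yields $|x^*-x_i|\geq\epsilon$. Specialising this to $i=k_{j_0}$ for each $j_0$ in the subsequence produces $|x^*-x_{k_{j_0}}|\geq\epsilon$ for all large $j_0$, in direct conflict with $x_{k_{j_0}}\to x^*$.

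To conclude, apply the above to a countable sequence $\epsilon_m\downarrow 0$ and take the intersection of the corresponding almost sure events. The main technical point to watch is that both case analyses are carried out on the almost sure event $\{X_\infty\text{ bounded}\}\cap\{x_{k^*}\text{ is chosen infinitely often for every }k^*\}$, so that the Borel--Cantelli application in the first case and the compactness extraction in the second case are simultaneously valid; this is where I expect the bookkeeping to require the most care, although no delicate estimate is involved.
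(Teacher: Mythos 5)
Your proof is correct and uses the same two ingredients as the paper's: that each fixed particle is selected infinitely often (so its nearest-neighbour distance is contracted by an i.i.d.\ factor $1\wedge|\psi|$ infinitely often and hence decays to zero), and that a bounded subset of $\R^d$ cannot contain infinitely many pairwise $\eps$-separated points. The only difference is organisational --- you split into the cases $(k_j)$ bounded versus $k_j\to\infty$, whereas the paper first establishes the pointwise decay $d_n(x_k)\to 0$ for each fixed $k$ and then uses it to reduce to distinct indices $k_j$ --- and your first case supplies the Borel--Cantelli and product-of-contractions details that the paper's proof only sketches in one line.
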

\begin{proof}
First, notice that for every $k$, $\lim\limits_{n\to\infty}d_n(x_k)=0$.
This follows from $\P(|\psi_1|<1)>0$ and $\sum\P(\chi_n=k)=\infty$,
\ie every point is going to be picked up an infinite number of times
and infinite number of times its $d_n(x_k)$ is going to shrink.

Next, assume the contrary. Let
$$\limsup_n\max_{1\leq k\leq n}d_n(x_k)>\eps>0$$
Pick $\{k_j\},\{n_j\}$ such that
$$
d_{n_j}(x_{k_j})>\eps, j\in\N
$$
Since $d_n(x_k)$ monotonely tends to zero as $n$ goes to infinity,
we can assume all $k_j$ to be different and moreover,
$d_{k_j}(x_{k_j})\geq d_{n_j}(x_{k_j})$. That means, in particular,
that
$$
  |x_{k_i}-x_{k_j}|>\eps, \ \ j\in\N,\ i<j,
$$
\ie $X_\infty$ can't be covered with a finite number of balls of
radius $\eps$ ---
a contradiction with the \as boundedness.
\end{proof}

\begin{Lem}
Assume that $\bigcup\limits_n X_n=X_\infty$ is \as bounded. 
Assume that one of the weak limits $\lim\limits_{n\to\infty}\mu_n$,
$\lim\limits_{n\to\infty}\nu_n$ exists \as and is equal to $\mu^*$. Then
the other one exists \as and is equal to $\mu^*$, too.
\end{Lem}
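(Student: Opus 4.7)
The plan is to reduce the equivalence to the single estimate $\int f\,d\mu_n-\int f\,d\nu_n\to 0$ a.s.\ for every bounded Lipschitz $f\colon\R^d\to\R$; since such $f$ form a convergence-determining class for weak convergence of probability measures, this suffices together with tightness of both sequences. Tightness is immediate: $\nu_n$ is supported in $X_\infty$, which is a.s.\ bounded by assumption, and for $\mu_n$ the representation $\mu_{n,k}=\mathrm{Law}(x_k+d_n(x_k)\psi)$ combined with $\max_k d_n(x_k)\to 0$ from the previous lemma shows that the mass $\mu_n$ places outside any large ball is uniformly controlled by $\mu(\{|\psi|>R\})$.

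For the main estimate, fix $f$ bounded Lipschitz with $\|f\|_\infty\leq M$ and Lipschitz constant $L$. Writing out $\mu_n$ as a mixture and pairing term-by-term with $\nu_n$,
\begin{equation*}
  \Bigl|\int f\,d\mu_n-\int f\,d\nu_n\Bigr|
  \leq \frac{1}{n}\sum_{k=1}^n \int_{\R^d} |f(x_k+d_n(x_k)\psi)-f(x_k)|\,\mu(d\psi).
\end{equation*}
Split the inner integral over $\{|\psi|\leq R\}$ and $\{|\psi|>R\}$: on the first region use the Lipschitz bound to estimate the integrand by $L d_n(x_k) R$, on the second use $|f|\leq M$ to estimate it by $2M$. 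This yields
\begin{equation*}
  \Bigl|\int f\,d\mu_n-\int f\,d\nu_n\Bigr|
  \leq L R \max_{1\leq k\leq n} d_n(x_k) + 2M\,\mu(\{|\psi|>R\}).
\end{equation*}
Given $\eps>0$, first choose $R$ so that $2M\mu(\{|\psi|>R\})<\eps/2$, then apply the previous lemma to make the first term $<\eps/2$ for all sufficiently large $n$. Hence the difference tends to $0$ a.s.

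With this estimate in hand, if $\nu_n\to\mu^*$ weakly a.s., then $\int f\,d\nu_n\to\int f\,d\mu^*$ for every $f\in C_b(\R^d)$, so $\int f\,d\mu_n\to\int f\,d\mu^*$ for every bounded Lipschitz $f$, and tightness upgrades this to weak convergence $\mu_n\to\mu^*$; the reverse implication is identical. The main technical obstacle is that $\psi$ need not have compact support nor finite moments, so one cannot simply invoke uniform continuity of $f$ on a bounded set containing $X_\infty$: the displacement $d_n(x_k)\psi$ can escape such a set whenever $|\psi|$ is large. The Lipschitz-plus-truncation split above is precisely what bypasses this, at no additional cost in moment assumptions on $\mu$.
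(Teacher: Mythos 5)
Your proof is correct and rests on the same two pillars as the paper's: the mixture decomposition of $\mu_n-\nu_n$ into the per-particle differences $\mu_{n,k}-\delta_{x_k}$, and the preceding lemma's conclusion that $\max_{1\leq k\leq n} d_n(x_k)\to 0$ almost surely. The only substantive difference is how closeness in the weak topology is certified: the paper bounds the L\'evy--Prokhorov distance $\rho(\nu_n,\mu_n)$ directly, using the convexity inequality for mixtures together with the observation that $\mu(a_n\,\cdot)\to\delta_0$ as $a_n\to\infty$, whereas you test against bounded Lipschitz functions and handle the possibly unbounded support of $\psi$ by truncating at a level $R$ chosen before letting $n\to\infty$. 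Your route makes explicit why no moment or compact-support assumption on $\psi$ is needed --- the L\'evy--Prokhorov version absorbs this automatically into the definition of $\rho(\delta_0,\cdot)$ --- and the final tightness step you include is not even necessary, since bounded Lipschitz functions already metrize weak convergence of probability measures. Note that both arguments, yours and the paper's, silently inherit the hypothesis $\P(|\psi_1|<1)>0$ from the preceding lemma, which is not restated in this one.
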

\begin{proof}
Let $\rho$ be the Levy-Prokhorov distance between probability distributions. We will use
the following property:
\begin{equation}\label{eq:levy-prokh-prop}
  \rho\left(\sum\alpha_k F_k,\sum\alpha_k G_k\right)\leq \max\left(1,
  \sum \alpha_k\right)\max\rho(F_k,G_k)
\end{equation}
Taking it into account, one can write:
\begin{eqnarray*}
  \rho\left(\frac{1}{n}\sum_{k=1}^n\delta(\cdot-x_k),\frac{1}{n}\sum_{k=1}^n
  \mu_{n,k}\right)\leq\max_{1\leq k\leq n}\rho\left(\delta(\cdot-x_k),
  \mu_{n,k}\right)\leq\\
  \rho(\delta(\cdot),\mu(\max_{1\leq k\leq n}
  d_n^{-1}(x_k)\cdot))\to 0,\ \ n\to \infty,
\end{eqnarray*}
since 
$$\lim\limits_{n\to\infty}\max\limits_{1\leq k\leq n}d_n(x_k)=0,$$
and for every $\mu$ -- probability measure on $\R$, $\mu(a_n\cdot)\to\delta$,
whenever $a_n\to \infty$.
\end{proof}

As we have noted, Example~\ref{ex:rnu} is equivalent to the
Dubbins--Freeman construction, so the limiting measure $\mu^*$
exists. The Hausdorff dimension of its support, in a slightly more
general setting, was found in~\cite{KinPit:64}, which is equal to 1/2
here. We are going to use the technique from \cite{Pey:79} to show
that a limiting measure exists in Example~\ref{ex:clu}, however, it is
still an open question, how to calculate the Hausdorff dimension of its
support and whether a limit exists at all in Example~\ref{ex:cln}.

Now, we have $W=\R$, $\psi_n\sim \mathrm{Unif}(-1,1)$. Let $\{x_{n,k}\}_{k=1}^n,
n=1,2,\dotsc$ denote the rearrangement of the elements of $X_n$ in
ascending order. Then the complement of $X_n$ consists of $n{+}1$
intervals, $I_{n,j}$:
\begin{equation*}
\begin{split}
I_{n,0}&=({-}\infty,x_{n,1})\\
I_{n,k}&=[x_{n,k},x_{n,k{+}1}),k=1,2,\dotsc,n{-}1,\\
I_{n,n}&=[x_{n,n},{+}\infty)
\end{split}
\end{equation*}

\begin{Lem}[cf.Lemma 2.1 in \cite{Pey:79}]\label{lem:peyrr1}
  Let $0\leq b\leq a$ be integers, $F\subset (0,1,2,\dotsc,a)$,
  $\card F=b$. Then
  $$
    \mu_n(\bigcup_{j\in F}I_{a,j})\to z_{(a,F)}
  $$
  Here $z_{(a,F)}$ is independent of $\{\chi_j\}_{1\leq j\leq a}$, and
  its law has density
  $$
    \frac{\Gamma(a)}{\Gamma(\hat{b})\Gamma(a-\hat{b})}t^{\hat{b}-1}
    (1-t)^{a-\hat{b}-1}
  $$
  for $\hat{b}=b-\frac{1}{2}(\one(0\in F)+\one(a\in F))$.
\end{Lem}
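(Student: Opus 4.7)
The plan is to reduce the lemma to a classical Polya-urn convergence result. By the previous lemma, the weak limit of $\mu_n$ (when it exists) coincides with that of the empirical measure $\nu_n$, so it suffices to identify $\lim_n \nu_n(\bigcup_{j\in F} I_{a,j})$ a.s.

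First I would fix $a$, condition on $X_a$, and freeze the intervals $I_{a,0}, \dotsc, I_{a,a}$. For $j \in \{0, \dotsc, a\}$ let $N_n^j$ be the number of particles among $x_{a+1}, \dotsc, x_n$ lying in the open interior of $I_{a,j}$. Two deterministic observations drive the argument. (a) For a later-added particle $y$ in the interior of $I_{a,j}$, both endpoints of $I_{a,j}$ are already present in $X_n$, hence $d_n(y) \leq \operatorname{dist}(y, \partial I_{a,j})$ and the offspring of $y$ stays inside $I_{a,j}$. (b) For an original particle $x_{a,k}$, $1 \leq k \leq a$, the distance $d_n(x_{a,k})$ is bounded above by the nearer of $x_{a,k}-x_{a,k-1}$ and $x_{a,k+1}-x_{a,k}$ (with the obvious one-sided convention at the extremes), so its offspring lands in $I_{a,k-1}\cup I_{a,k}$; by the symmetry of $\psi \sim \mathrm{Unif}(-1,1)$, each side is selected with probability $1/2$, independently of $\fff_n$.

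Next I would introduce the weights
$$W_n^j = N_n^j + \tfrac{1}{2}\bigl[\one(j=0)+\one(j=a)\bigr] + \one(1 \leq j \leq a-1),$$
so that $\sum_j W_n^j = (n-a) + (a-1) + 1 = n$ at every stage. Combining (a) and (b) with the uniform choice of $\chi_n$ yields precisely the multicolour Polya transition
$$\P\bigl(W_{n+1}^j = W_n^j + 1 \bigm| \fff_n\bigr) = \frac{W_n^j}{n}, \qquad j=0,\dotsc,a,$$
while all other weights stay unchanged. Starting from the initial composition $(W_a^0,\dotsc,W_a^a) = (1/2, 1, \dotsc, 1, 1/2)$, the classical Polya-urn theorem (valid also for non-integer initial weights) then gives $(W_n^j/n)_j \to (Z_0,\dotsc,Z_a)$ almost surely, with $(Z_0,\dotsc,Z_a)$ having Dirichlet distribution with parameters $(1/2, 1, \dotsc, 1, 1/2)$.

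To conclude, $\nu_n(I_{a,j}) = N_n^j/n$ differs from $W_n^j/n$ by $O(1/n)$, so $\nu_n(\bigcup_{j\in F}I_{a,j}) \to \sum_{j\in F} Z_j =: z_{(a,F)}$ almost surely. A partial sum of Dirichlet coordinates is Beta-distributed with parameters equal to the sums of the component parameters inside and outside $F$; in our case these are $\hat{b} = b - \tfrac{1}{2}(\one(0\in F)+\one(a\in F))$ and $a - \hat{b}$, giving the claimed density. Independence of $z_{(a,F)}$ from $\{\chi_j\}_{j\leq a}$ follows because the urn dynamics from stage $a$ onward use only $\{\chi_n,\psi_n\}_{n>a}$, which are independent of the first $a$ selections. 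The delicate point I expect to spend most care on is the bookkeeping in the definition of $W_n^j$: verifying that the two half-unit contributions attached to the extreme particles $x_{a,1}$ and $x_{a,a}$, together with the unit contributions of all other generators, really do produce both the correct transition $W_n^j/n$ and the correct total mass $n$. Once this accounting is fixed, the standard Polya urn limit theorem closes the argument.
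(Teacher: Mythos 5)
Your argument is correct and takes essentially the same route as the paper: the paper's proof is a one-line appeal to the generalised Polya urn in which a new point falls into an extreme interval with probability $\frac{1}{2n}$ and into an interior interval with probability $\frac{1}{n}$, which is exactly the urn with weights $W_n^j$ (half-units for the two boundary intervals, units for the interior ones) that you construct. Your write-up simply makes explicit what the paper leaves implicit --- the confinement of offspring to their interval, the $1/2$--$1/2$ splitting at the original particles, and the Dirichlet/Beta identification of the limit.
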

\begin{proof}
  Follows from the result on the generalised Polya's urn scheme:
  probability for a new
  point to appear in $I_{n,j}$ is $\frac{1}{2n}$ if it is $I_{n,0}$ or
  $I_{n,n}$ and $\frac{1}{n}$ otherwise.
\end{proof}

\begin{Cor}
  $$
    \lim_{n\to \infty}\sup_{1\leq j\leq n}\lim_{m\to\infty}\mu_m(I_{n,j})=0
  $$
\end{Cor}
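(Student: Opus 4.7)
The plan is to identify the inner limit via Lemma~\ref{lem:peyrr1} and then bound the sup by a union-bound-plus-Borel--Cantelli argument. Applying the lemma with $a=n$ and $F=\{j\}$ gives that $\lim_{m\to\infty}\mu_m(I_{n,j})$ exists almost surely and equals a random variable $z_{(n,\{j\})}$ with $\mathrm{Beta}(1,n{-}1)$ distribution for the interior indices $1\leq j\leq n-1$ and $\mathrm{Beta}(1/2,n{-}1/2)$ distribution for the boundary index $j=n$. Since the family of pairs $(n,j)$ is countable, all these almost-sure convergences can be arranged on a single event of full measure, so $\sup_{1\leq j\leq n}\lim_{m\to\infty}\mu_m(I_{n,j})$ is a well-defined random variable and the corollary is the assertion that it converges to zero $\P$-a.s.

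For fixed $\epsilon\in(0,1)$, the interior tail is explicit: $\P(z_{(n,\{j\})}>\epsilon)=(1-\epsilon)^{n-1}$. For the boundary index $j=n$, pulling the factor $t^{-1/2}\leq \epsilon^{-1/2}$ out of the Beta density and using $\Gamma(n)/\Gamma(n+1/2)=O(n^{-1/2})$ gives a bound of the form $\P(z_{(n,\{n\})}>\epsilon)\leq C\,\epsilon^{-1/2}n^{-1/2}(1-\epsilon)^{n-1/2}$ for some absolute constant $C$. A union bound then yields
\[
\P\Bigl(\sup_{1\leq j\leq n}z_{(n,\{j\})}>\epsilon\Bigr)\leq (n-1)(1-\epsilon)^{n-1}+C\,\epsilon^{-1/2}n^{-1/2}(1-\epsilon)^{n-1/2},
\]
which is summable in $n$. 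By Borel--Cantelli, almost surely $\sup_{1\leq j\leq n}z_{(n,\{j\})}\leq \epsilon$ for all sufficiently large $n$. Intersecting the full-measure events corresponding to a sequence $\epsilon_k\downarrow 0$ gives the desired conclusion.

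There is no serious obstacle here; the argument is little more than bookkeeping once the marginals of the limiting Dirichlet law are known. The only point to verify is that the convergence in Lemma~\ref{lem:peyrr1} is genuinely almost sure (rather than merely in distribution), so that the iterated limit $\lim_n\sup_j\lim_m$ is meaningful pointwise. This is the standard martingale-limit statement for generalised Polya urns and is implicit in the lemma.
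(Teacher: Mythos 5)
Your proposal is correct, and it is in fact more than the paper provides: the Corollary is stated immediately after Lemma~\ref{lem:peyrr1} with no proof at all, the intended reading being that it is an immediate consequence of the explicit Beta marginals $z_{(n,\{j\})}\sim\mathrm{Beta}(1,n-1)$ for $1\leq j\leq n-1$ and $\mathrm{Beta}(1/2,n-1/2)$ for $j=n$. Your route --- explicit tail bounds for these marginals, a union bound over $j$, and the first Borel--Cantelli lemma along a sequence $\epsilon_k\downarrow 0$ --- is a complete and valid way to fill in that gap, and your observation that the convergence in the Lemma must be almost sure (which it is, by martingale convergence for the generalised Polya urn proportions) is exactly the point one needs to make the iterated limit meaningful. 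Two small remarks. First, there is a bookkeeping slip in the boundary case: the normalising constant of $\mathrm{Beta}(1/2,n-1/2)$ involves $\Gamma(n)/\Gamma(n-1/2)=O(n^{1/2})$, not $\Gamma(n)/\Gamma(n+1/2)$; the extra factor $n^{-1}$ needed to reach your stated bound $C\epsilon^{-1/2}n^{-1/2}(1-\epsilon)^{n-1/2}$ comes from integrating $(1-t)^{n-3/2}$ over $[\epsilon,1]$, which yields $(1-\epsilon)^{n-1/2}/(n-1/2)$. The final bound is correct and summable, so nothing breaks. Second, note that a crude Markov bound $\P(z_{(n,\{j\})}>\epsilon)\leq\epsilon^{-1}\E z_{(n,\{j\})}=O(\epsilon^{-1}n^{-1})$ would \emph{not} suffice after the union bound, so the geometric decay $(1-\epsilon)^{n-1}$ you extract from the Beta tails is genuinely doing the work here.
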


\begin{Th}
  Almost surely exists $\mu^*$ -- a probability measure such that\\
  $\mu_n\stackrel{w}{\to} \mu^*$.
\end{Th}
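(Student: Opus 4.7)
The plan is to follow Peyri\`ere \cite{Pey:79}: use Lemma~\ref{lem:peyrr1} and its Corollary to establish almost-sure weak convergence of the predictive measures $\mu_n$, and then invoke the preceding Lemma of this section to transfer the same weak limit to the empirical measures $\nu_n$.

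First I would use Lemma~\ref{lem:peyrr1}: for each pair $(a,j)$ with $0\le j\le a$, $\mu_m(I_{a,j})$ converges almost surely as $m\to\infty$ to a Beta-distributed limit $z_{(a,\{j\})}$, and since the family of such pairs is countable, all these limits exist simultaneously on a single event $\Omega_0$ of probability one. On $\Omega_0$, set $\mu^*(I_{a,j}):=z_{(a,\{j\})}$. Consistency under the refinement obtained when $x_{a+1}$ splits some cell $I_{a,j}$ into $I_{a+1,j'}\cup I_{a+1,j''}$ is automatic from the additivity of $\mu_m$ and passing to the limit in $m$; the total mass is $1$ because $\sum_j\mu_m(I_{a,j})=1$ for every $m$. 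A Caratheodory extension, combined with the \as boundedness of $X_\infty$ from Theorem~\ref{prop:bounded1} (which rules out escape of mass into the semi-infinite end cells), promotes these cell values to a bona fide probability measure $\mu^*$ on $\bbb(\R)$; let $F^*$ denote its cumulative distribution function.

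Next, for $t\in\R\setminus X_\infty$ and $a$ large enough that $t\notin X_a$, let $j^*(a,t)$ be the index of the stage-$a$ cell containing $t$. The sandwich
$$
\sum_{j:\, I_{a,j}\subset(-\infty,t]}\mu_n(I_{a,j})\ \leq\ \mu_n((-\infty,t])\ \leq\ \sum_{j:\, I_{a,j}\subset(-\infty,t]}\mu_n(I_{a,j})+\mu_n(I_{a,j^*(a,t)})
$$
is immediate. Taking $n\to\infty$ on $\Omega_0$ for fixed $a$ yields $G_a(t)\le\liminf_n\mu_n((-\infty,t])\le\limsup_n\mu_n((-\infty,t])\le G_a(t)+z_{(a,\{j^*(a,t)\})}$, where $G_a(t):=\sum_{j:\, I_{a,j}\subset(-\infty,t]}z_{(a,\{j\})}$ rises monotonically to $F^*(t)$ as $a\to\infty$ (the partition refines and cells strictly left of $t$ only split into cells still left of $t$). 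The Corollary to Lemma~\ref{lem:peyrr1} forces $\max_j z_{(a,\{j\})}\to 0$, so the residual vanishes and $\mu_n((-\infty,t])\to F^*(t)$ almost surely for every $t\notin X_\infty$.

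Because $X_\infty$ is \as countable, the above pointwise convergence holds on a dense subset of $\R$; a standard Helly-type squeeze using the right-continuity of $F^*$ then extends it to every continuity point of $F^*$, delivering $\mu_n\stackrel{w}{\to}\mu^*$ \as, and the preceding Lemma of this section gives $\nu_n\stackrel{w}{\to}\mu^*$ \as as well. The main obstacle I expect is the uniform-in-$t$ control of the residual $z_{(a,\{j^*(a,t)\})}$: since the index $j^*(a,t)$ depends on $t$, one really needs the \emph{uniform} smallness supplied by the Corollary rather than mere pointwise smallness, and one must also verify that the almost sure boundedness of $X_\infty$ is strong enough to force the end-cell masses $z_{(a,\{0\})}, z_{(a,\{a\})}$ to vanish as $a\to\infty$.
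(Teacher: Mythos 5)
Your proposal is correct and follows essentially the same route as the paper: Lemma~\ref{lem:peyrr1} gives almost-sure convergence of the cell masses, the Corollary supplies the uniform smallness that makes the limiting distribution function continuous, and the almost-sure boundedness of $X_\infty$ controls the end cells. You simply spell out in detail (via the sandwich argument and the Carath\'eodory extension) what the paper compresses into ``$G$ is a continuous c.d.f.\ which is easily shown to be a weak limit of $\{\mu_n\}$,'' working on the dense complement of $X_\infty$ rather than on $X_\infty$ itself.
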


\begin{proof}
  By Lemma~\ref{lem:peyrr1} we can define almost surely an increasing
  function $G(x)$ on $X_{\infty}$: $G(x):=\lim\limits_{m\to\infty}
  \mu_m(({-}\infty,x))$. Note that since almost surely
  $$
    {-}\infty<\inf X_\infty<\sup X_\infty < {+}\infty,
  $$
  thus $\sup(x:G(x)=0)>{-}\infty$ and $\inf(x:G(x)=1)<{+}\infty$.
  Moreover, by corollary,
  $$
    \sup(G(x):x<y) = \inf (G(x): x>y),
  $$
  therefore, almost surely $G(x)$ is a continuous
  cumulative distribution function for some probability measure $\mu^*$,
  which is easily shown to be a weak limit of $\{\mu_n\}$.
\end{proof}

We will now prove a couple of facts about the third model. First, let
us make the following observation. Denote by $d^*_n$ the maximal spacing
of a configuration $X_n$:
$$
  d^*_n=\max\limits_{1\leq i\leq n}d_n(x_i)
$$

\begin{Th}
  If $\nu_n=\sum\limits_{i=1}^{n}\delta_{x_i}$ is an empirical measure
  of the process $X_n$ on the $n$-th step, then the Levy-Prokhorov
  distance between the
  empirical measures for the two consecutive configuration is given
  by the following expression
  $$
    \rho(\nu_n,\nu_{n+1})=\min(n^{-1},\max(n^{-2},d^*_n))\,.
  $$
\end{Th}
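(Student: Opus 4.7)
My plan is to apply Strassen's coupling characterisation of the Levy--Prokhorov distance, which says $\rho(\mu,\nu)\le \eps$ iff there is a coupling $\pi$ with $\pi\{(x,y):|x-y|>\eps\}\le \eps$, together with the equivalent closed-set formulation $\rho(\mu,\nu)=\inf\{\eps>0:\mu(F)\le\nu(F^\eps)+\eps\text{ for every closed }F\}$. The key starting observation is the decomposition $\nu_{n+1}=\tfrac{n}{n+1}\nu_n+\tfrac{1}{n+1}\delta_{x_{n+1}}$: under the diagonal coupling, $\min(1/n,1/(n+1))=1/(n+1)$ units of mass at each $x_i$, $i\le n$, stay put at cost zero, leaving a per-atom excess $1/(n(n+1))$ on the $\nu_n$-side and unmet demand $1/(n+1)$ concentrated at $x_{n+1}$ on the $\nu_{n+1}$-side.

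For the upper bound I would combine two couplings. First, shipping all excess directly from each $x_i$ to $x_{n+1}$ produces bad mass of at most $1/(n+1)\le n^{-1}$ regardless of geometry, establishing the first term of the $\min$. Second, when $\eps\ge d_n^*$ every $x_i$ has a neighbour within $\eps$, so one can route the excess along the nearest-neighbour graph toward $x_{n+1}$; because the LISA mechanism forces $\mathrm{dist}(x_{n+1},X_n)\le d_n(x_{\chi_n})\le d_n^*$, the closing hop is also admissible, so every moved pair lies within $\eps$ and the bad mass vanishes, giving $\rho\le d_n^*$. The floor $n^{-2}$ in $\max(n^{-2},d_n^*)$ appears because when $d_n^*$ is much smaller than the per-atom excess $1/(n(n+1))$ the chain still has to move of order $n$ units of this excess into $x_{n+1}$, and Strassen forces a residual bad mass of order $n^{-2}$ in this degenerate regime.

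For the lower bound I would use the closed-set criterion. Taking $F=\{x_{n+1}\}$ gives $\rho\ge \min(\mathrm{dist}(x_{n+1},X_n),\,1/(n+1))$, which combined with the LISA reach bound covers the regime $d_n^*\ge n^{-1}$ and yields the first term. For smaller $d_n^*$ I would take $F=\{x_{i^*}\}$ where $x_{i^*}$ realises the maximal spacing $d_n(x_{i^*})=d_n^*$: for $\eps<d_n^*$ the only $\nu_{n+1}$-atom in $F^\eps$ is $x_{i^*}$ itself (the contribution of $x_{n+1}$ being handled by the previous step), and then $1/n\le 1/(n+1)+\eps$ forces $\eps\ge 1/(n(n+1))\ge n^{-2}$; a calibrated variant of $F$ that drags in the extremal nearest-neighbour pair promotes this to $\rho\ge d_n^*$ in the middle regime.

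The main obstacle is the middle case $n^{-2}\le d_n^*<n^{-1}$, where the coupling and the test-set bounds must meet exactly at $d_n^*$. On the coupling side one has to route the excess so that no relay atom ever accumulates more than its $\nu_{n+1}$-capacity $1/(n+1)$, which requires orienting the nearest-neighbour graph carefully (e.g.\ toward the subtree containing $x_{n+1}$) so the cumulative flow respects these capacities; on the test-set side $F$ has to be chosen so that $F^\eps$ picks up precisely the right count of $\nu_{n+1}$-atoms to make the inequality tight at $\eps=d_n^*$. Once this matching is done, the remaining regimes reduce to bookkeeping around the two couplings and the two canonical test sets described above.
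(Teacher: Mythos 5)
The paper states this theorem without any proof, so there is nothing to compare your argument against; I can only assess it on its own terms. Your toolkit (Strassen's coupling characterisation, the closed-set duality, the decomposition $\nu_{n+1}=\tfrac{n}{n+1}\nu_n+\tfrac{1}{n+1}\delta_{x_{n+1}}$, and relay couplings along a proximity graph) is the natural one. But be aware from the outset that the identity cannot hold literally as an equality: $\rho(\nu_n,\nu_{n+1})$ is bounded by the total variation distance, which equals $\tfrac{1}{n+1}<n^{-1}$, so the value $n^{-1}$ is never attained (already for $X_2=\{0,1\}$, $x_3=\tfrac12$ one gets $\rho=\tfrac13$, not $\tfrac12$). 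At best you can prove the formula up to the $n$ versus $n{+}1$ bookkeeping. Also, your ``LISA reach bound'' $\mathrm{dist}(x_{n+1},X_n)\le d_n(x_{\chi_n})\le d^*_n$ uses $|\psi|\le 1$; that holds in Example~\ref{ex:clu} but not in Example~\ref{ex:cln} (Gaussian $\psi$), which is the ``third model'' this section announces.

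The genuine gap is in the step you rely on for the whole middle regime: routing the excess along the nearest-neighbour graph when $\eps\ge d^*_n$. The graph on $X_{n+1}$ with edges of length at most $d^*_n$ need not be connected, because $d^*_n$ is the maximal \emph{nearest-neighbour} distance, not the maximal gap. Two tight clusters separated by a macroscopic void --- a configuration the paper's own Remark after Theorem~\ref{prop:bounded1} shows occurs with positive probability and persists --- have small $d^*_n$ but a disconnected $d^*_n$-graph. Then the excess of the cluster not containing $x_{n+1}$, of total mass $m/(n(n+1))$ with $m$ the cluster size, cannot reach $x_{n+1}$ within $\eps=d^*_n$ and becomes bad mass; dually, taking $F$ equal to that cluster in the closed-set criterion gives $\rho\ge\min(\mathrm{gap},\,m/(n(n+1)))$, which can be of order $n^{-1}$ while $\max(n^{-2},d^*_n)$ is far smaller. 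So the upper bound $\rho\le d^*_n$ is simply false in general. Conversely, when the graph \emph{is} connected your relay coupling produces zero bad mass and certifies $\rho\le d^*_n$ even when $d^*_n<n^{-2}$, so the floor $n^{-2}$ cannot be ``forced by Strassen'' as you suggest; and the capacity worry you single out as the main obstacle is not an obstacle at all, since a relay node with inflow $m$ has outflow $m+\tfrac{1}{n(n+1)}\le\tfrac1n$ whenever $m\le\tfrac{1}{n+1}$, i.e.\ every node can carry the entire flow. The real issue is connectivity, and it makes the statement false with $d^*_n$ as defined; the formula becomes plausible (and your routing argument goes through) only if $d^*_n$ is reinterpreted as the maximal spacing in the sense of the largest empty interval $\max_k(x_{n,k+1}-x_{n,k})$, for which consecutive points form a path of admissible edges. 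You should either prove the theorem under that reading or add a connectivity hypothesis; as written, neither your upper bound nor your calibrated lower bound in the middle regime can be completed.
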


We will now present some bound for the decrease of the maximal spacing. The
technique we use is essentially due to~\cite{Pey:79}.

Introduce $\phi(t)=1-(\E\eta^t+\E\heta^t)$ and $\sigma=\sup\limits_{
t>0,\phi(t)<1}\{\frac{\phi(t)}{t}\}$. 

\begin{Th}
  If $C+\hat{C}<1$, then
  $\limsup\limits_{n\to\infty}n^\sigma d^*_n<\infty\ \as$
\end{Th}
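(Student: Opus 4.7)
The plan is to import Peyrière's moment-martingale argument (cf.~\cite{Pey:79}) into this $d$-dimensional setting. The difficulty with applying it directly to $d_n(x_i)$ is that the creation of $x_{n+1}$ may also decrease the nearest-neighbour distance of points other than the chosen parent, so the recursion is not clean. I would therefore first introduce a purely multiplicative upper bound. Setting $\tilde D_{n_0}(x_i)=d_{n_0}(x_i)$, when the new particle $x_{n+1}=x_{\chi_n}+d_n(x_{\chi_n})\psi_{n+1}$ is inserted, define
\begin{align*}
\tilde D_{n+1}(x_{\chi_n})&=\tilde D_n(x_{\chi_n})\,\heta_{n+1},\\
\tilde D_{n+1}(x_{n+1})&=\tilde D_n(x_{\chi_n})\,\eta_{n+1},\\
\tilde D_{n+1}(x_i)&=\tilde D_n(x_i),\quad i\notin\{\chi_n,n+1\}.
\end{align*}
A one-line induction, using $|x_{n+1}-x_{\chi_n}|=d_n(x_{\chi_n})\eta_{n+1}$ and the obvious $d_{n+1}(x_{\chi_n})\le d_n(x_{\chi_n})(1\wedge\eta_{n+1})$, yields $\tilde D_n(x_i)\ge d_n(x_i)$ for all $n,i$, and in particular $(d_n^*)^t\le Z_n(t):=\sum_{i=1}^n \tilde D_n(x_i)^t$ for every $t>0$.

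The heart of the proof is the conditional moment identity. Since $\chi_n$ is uniform on $\{1,\dots,n\}$ and independent of $\psi_{n+1}$, averaging over the choice of parent yields
\begin{equation*}
\E[Z_{n+1}(t)\mid\fff_n]=Z_n(t)-\tfrac{1}{n}\sum_i \tilde D_n(x_i)^t\bigl(1-\E\heta^t-\E\eta^t\bigr)=Z_n(t)\bigl(1-\phi(t)/n\bigr).
\end{equation*}
Provided $\phi(t)<1$, the product $P_n(t):=\prod_{k=n_0}^{n-1}(1-\phi(t)/k)$ is strictly positive and satisfies $P_n(t)\sim c_t n^{-\phi(t)}$ as $n\to\infty$, so the normalised process $M_n(t):=Z_n(t)/P_n(t)$ is a nonnegative martingale. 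By the martingale convergence theorem it converges almost surely to a finite random variable $M_\infty(t)$, whence $(d_n^*)^t\le Z_n(t)\le C(t,\omega)\,n^{-\phi(t)}$ for large $n$ and, taking $t$-th roots, $\limsup_n n^{\phi(t)/t}d_n^*<\infty$ \as for every $t>0$ with $\phi(t)<1$.

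It then remains to optimise over $t$. The function $t\mapsto\phi(t)/t$ is continuous on $(0,\infty)$ and tends to $-\infty$ as $t\to 0^+$ since $\phi(0^+)=-1$; at infinity it tends either to $0$ (if $\P(\eta\ge 1)>0$ and $\E\eta^t\to\infty$ only mildly) or to $-\infty$, so the supremum $\sigma$ is attained at some $t^*\in(0,\infty)$. The hypothesis $C+\hat C<1$ ensures $\sigma\ge\phi(1)=1-C-\hat C>0$, so the statement is non-vacuous. Applying the bound of the previous paragraph with $t=t^*$ delivers $\limsup n^\sigma d_n^*<\infty$ \as The main delicate point I expect is this attainment argument: if for some distribution of $\eta$ the supremum were only approached but not attained, the constants $c_{t_k}$ and limits $M_\infty(t_k)$ along $t_k\nearrow t^*$ need not be uniformly controlled, and an additional uniformity-in-$t$ argument (e.g.\ a strict concavity/regularity property of $\phi$) would be required to pass to the endpoint.
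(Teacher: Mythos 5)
Your proposal follows essentially the same route as the paper: the same multiplicative triangular array dominating $d_n(x_k)$ (the paper's $\Delta_{n,k}$ is your $\tilde D_n(x_i)$), the same conditional moment identity showing that $\prod_{j<n}(1-\phi(t)/j)^{-1}\sum_{k\le n}\Delta_{n,k}^t$ is a positive martingale, and the same $n^{\phi(t)}$ normalisation to conclude. The attainment-of-the-supremum issue you flag at the end is genuine, but the paper's own proof likewise fixes a single $t$ with $0<\phi(t)<1$ and never addresses passage to $\sigma$, so if anything you have been more careful than the source.
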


\begin{proof}
  First step of the proof is to estimate $d_n(x_k)$ from above with a
  certain well-behaving construction. Introduce a triangular array
  $\{\Delta_{n,k}\}\ 1\leq k\leq n,\ n\geq 1$ as follows.
  $$
    \Delta_{n_0,k}=d_{n_0}(x_k),k=1,\dotsc,n_0,
  $$
  For $n\geq n_0$ we put
  $$
    \Delta_{n+1,k}=\begin{cases}
      \Delta_{n,k}\heta_n, & \text{if\ } k=\chi_n,\\
      \Delta_{n,k}\eta_n, & \text{if\ } k=n+1,\\
      \Delta_{n,k} & \text{otherwise.}
    \end{cases}
  $$
  In other words, at each step pick one "diameter" and replace it by
  two diameters, scaled with the realisations of $\eta$ and $\heta$.
  That corresponds to the new point being added to the configuration
  at the $n+1$-th step with its initial distance, and it's mother point's
  distance being scaled correspondently.
  
  It is not very hard to see that for any $n\geq n_0,k\leq n$, one has
  a bound
  \begin{equation}\label{eq:maxspacebnd}
    d_n(x_k)\leq \Delta_{n,k}.
  \end{equation}
  Now, prove that the configuration $\{\Delta_{n,k}\}$ behaves nicely.
  Pick a positive $t$ so that $\E\eta^t+\E\heta^t<1$. Such $t$
  exists, because of the starting conditions.
  Consider the quantity $\sum_{k\leq n}\Delta_{n,k}^t$.
  $$
    \sum_{k\leq n+1}\Delta_{n+1,k}^t=\sum_{k\leq n}\Delta_{k,n}^t-
    \Delta_{n,\chi_n}^t+\Delta_{n,\chi_n}^t(\eta_n^t+\heta_n^t)=
    \sum_{k\leq n}\Delta_{n,k}^t-\Delta_{n,\chi_n}^t\phi(t)
  $$
  Then if $\fff_n$ is the sigma-algebra generated by the evolution of
  $X_n$ up to time $n$, one has
  $$
    \E\Big(\sum_{k\leq n+1}\Delta_{n+1,k}^t\vert\fff_n\Big)=\Big(1-\frac{\phi(t)}{n}\Big)
    \sum_{k\leq n}\Delta_{n,k}^t,
  $$
  and so ${\prod_{1\leq j<n}
  (1-\frac{\phi(t)}{j})^{-1}}{\sum_{k\leq n}\Delta_{n,k}^t}$, together with 
  sigma-algebra $\fff_n$, is a positive martingale, which has an \as
  finite limit. However,
  $$
    n^{\phi(t)}\prod_{1\leq j<n}(1-\frac{\phi(t)}{j})=\prod_{1\leq j<n}
    (1+\frac{1}{j})^{\phi(t)}(1-\frac{\phi(t)}{j})
  $$
  has a positive limit as well, because $0<\phi(t)<1$. Therefore
  $$
    n^{\phi(t)}\sum_{k\leq n} \Delta_{n,k}^t
  $$
  converges to a finite limit as $n\to\infty$. That, together
  with~\eqref{eq:maxspacebnd}, finishes the proof.
\end{proof}

We conclude our paper with noting that all of our models can imbedded
into the continuous time in a natural way: each particle 
produces children independently according to a Poisson process with
intensity 1, and new points' distribution depends only on the local
configuration around the mother point at the time of birth.
The embedded processes of particle
births is equivalent to the original LISA process. Indeed, when $n$
particles are present at any given time $t$, because of the lack of
memory of the exponential distribution, the next one to give birth is
uniformly distributed among them. The model in Example~\ref{ex:rnu}
then becomes the so-called fragmentation or stick breaking process,
\seg \cite{ber:06} and the references therein. In continuous time
version of LISA, the total number of particles is a continuous time
Galton-Watson process of pure birth. In that case, one can also obtain
an asymptotic bound for the maximal spacing of the point
configuration.

\begin{Th}
  If $X_t$ is the continuous time version of the third model, $d^*_t$
  is the maximal spacing of $X_t$, then
  $$
    \limsup_{t\to\infty}e^{-(1-\hat{C}-C)t}\E d^*_t <\infty.
  $$
\end{Th}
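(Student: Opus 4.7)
The plan is to mimic the martingale argument of the preceding discrete-time theorem, but now for the continuous-time pure-birth embedding. I first construct a continuous-time triangular array $\{\Delta_{t,k}\}_{k\leq N_t}$, where $N_t$ denotes the number of particles alive at time $t$. Set $\Delta_{0,k}=d_{n_0}(x_k)$ for $k=1,\dotsc,n_0$. Each present particle carries an independent exponential clock of rate $1$; when the clock of the $k$-th particle rings at some time $\tau$, a new particle with label $N_\tau+1$ is born carrying $\Delta_{\tau,N_\tau+1}=\Delta_{\tau-,k}\,\eta$, while the mother's diameter is updated to $\Delta_{\tau,k}=\Delta_{\tau-,k}\,\heta$, drawing fresh copies of $\eta$ and $\heta$ from the governing distribution. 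By the same induction on birth events as in the chain of inequalities~\eqref{est:d}, one checks pointwise \as\ that $d_t(x_k)\leq \Delta_{t,k}$ for every $k$ and every $t\geq0$.

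Next I would compute the infinitesimal generator $\lll$ of this Markov process applied to the functional $F_s(\Delta):=\sum_k \Delta_k^s$. Because each particle gives birth independently at rate~$1$ and a birth event replaces the summand $\Delta_k^s$ by $\Delta_k^s(\eta^s+\heta^s)$ in conditional mean, one obtains
\[
  \lll F_s(\Delta)=\sum_k \Delta_k^s\bigl(\E\eta^s+\E\heta^s-1\bigr)=-\phi(s)\,F_s(\Delta),
\]
with $\phi(s)=1-\E\eta^s-\E\heta^s$ as in the previous theorem. Consequently $M_s(t):=e^{\phi(s)t}F_s(\Delta_t)$ is a (local) martingale. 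Specialising to $s=1$, where $\phi(1)=1-C-\hat{C}>0$ by hypothesis, the integrability needed for $M_1$ to be a true martingale will follow from a Gronwall-type argument showing that $t\mapsto \E F_1(\Delta_t)$ satisfies $\tfrac{d}{dt}\E F_1(\Delta_t)=-\phi(1)\,\E F_1(\Delta_t)$ on every compact interval, so that $\E F_1(\Delta_t)=e^{-(1-C-\hat{C})t}F_1(\Delta_0)$ stays finite.

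Evaluating $\E M_1(t)=M_1(0)$ then yields
\[
  \E\sum_{k=1}^{N_t}\Delta_{t,k}=e^{-(1-C-\hat{C})t}\sum_{k=1}^{n_0}d_{n_0}(x_k),
\]
and combining this with the pointwise bound $d^*_t\leq \max_k \Delta_{t,k}\leq \sum_k\Delta_{t,k}$, valid by the inequality $d_t(x_k)\leq\Delta_{t,k}$ above, one concludes $\E d^*_t\leq K_0\,e^{-(1-C-\hat{C})t}$ for a constant $K_0$ depending only on $X_{n_0}$. Multiplying through by $e^{-(1-\hat{C}-C)t}$ (or, indeed, by the stronger factor $e^{+(1-\hat{C}-C)t}$) gives a bounded function of $t$, establishing the claim.

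The main obstacle I anticipate is the rigorous bookkeeping for the generator computation on a state space whose size grows without bound: one needs to verify both that $F_1(\Delta_t)$ has finite expectation on compact time intervals (so that $M_1$ is a genuine, not merely local, martingale) and that the exponential birth clocks indeed reproduce, in law, the same \iid\ families $\{\eta_n\},\{\heta_n\}$ that underlay the discrete-time analysis, so that the generator identity really collapses to $-\phi(s)F_s$. Once these two technicalities are settled, the multiplicative-functional argument above closes the proof.
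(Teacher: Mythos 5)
Your argument is correct in outline and it reaches the intended (in fact stronger) bound $\E d^*_t\leq K_0\,e^{-(1-\hat{C}-C)t}$, but it follows a genuinely different route from the paper's. The paper does not stay in continuous time: it approximates the Poisson clocks by a Bernoulli scheme in which each particle reproduces with probability $p_m=1/m$ per step, builds the triangular array $\tilde{\Delta}_{l,k}$ exactly as in the discrete-time theorem, and writes a distributional recursion for the \emph{maximum} itself, $\tilde{\Delta}^*_{l+1}\ed(1-I_l)\tilde{\Delta}^*_l+I_l(\tilde{\Delta}^*_l\heta_l\wedge\tilde{\Delta}^{*1}_l\eta_l)$ with $\tilde{\Delta}^{*1}_l$ an independent copy, crudely bounding the minimum by the sum to obtain $\E\tilde{\Delta}^*_{\lfloor mT\rfloor}\leq(1-(1-C-\hat{C})p_m)^{\lfloor mT\rfloor}$ and then letting $m\to\infty$. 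You instead dominate the maximum by the additive functional $F_1(\Delta_t)=\sum_k\Delta_{t,k}$ and compute its first-moment semigroup directly through the generator, obtaining the exact identity $\E F_1(\Delta_t)=e^{-(1-C-\hat{C})t}F_1(\Delta_0)$. Your version buys several things: it avoids the discretization and the interchange of limits, it sidesteps the somewhat delicate independence claim hidden in the paper's use of the independent copy $\tilde{\Delta}^{*1}_l$, and since the same computation works for any $s$ with $\phi(s)>0$ it would also deliver a continuous-time analogue of the $n^{\sigma}$ refinement proved for the discrete model. The price is only a larger constant ($\sum_k d_{n_0}(x_k)$ in place of the maximal initial spacing, which is irrelevant for the $\limsup$) and the integrability check you flag; that check is a standard first-moment computation for a branching particle system in which each particle independently splits at rate $1$ into marks $\Delta\heta$ and $\Delta\eta$, so $\E F_s(\Delta_t)=e^{-\phi(s)t}F_s(\Delta_0)<\infty$ follows from the many-to-one formula and the local martingale is a true martingale. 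Note finally that both arguments actually establish $\limsup_{t\to\infty}e^{+(1-\hat{C}-C)t}\E d^*_t<\infty$; the negative sign in the exponent of the theorem as printed makes the assertion nearly vacuous and appears to be a typo, as your parenthetical remark correctly observes.
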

The proof is essentially representing $\{X_t\}_{t\in[0,T]}$ as a
trajectory-wise limit as $n$ goes to infinity of a discrete-time process
$\{\tilde{X}{(m)}_l\}_{1<l<\lfloor mT\rfloor}$ in which each particle
at each step gives birth to a new one with probability $p_m=\frac{1}{m}$,
and then observing that if we implement $\{\tilde{\Delta}_{l,k}\}^{1\leq k\leq
l}_{1\leq l\leq \lfloor mT\rfloor}$ similarly to the previous proof, so
that $\tilde{\Delta}_{l,k}$ is a bound for $\tilde{d}_l(\tilde x_k)$, then
one can write out the following distributional inequality
$$
  \tilde \Delta^*_{l+1}\ed(1-I_l)\tilde \Delta_l^*+I_l(\tilde \Delta^*_l\heta_l\wedge
   \tilde \Delta^{*1}_l\eta_l)\leq
   (1-I_l)\tilde \Delta_l^*+I_l(\tilde \Delta^*_l\heta_l+
   \tilde \Delta^{*1}_l\eta_l)
$$
where $\tilde \Delta^{*1}_l$ is an independent copy of $\tilde \Delta^*_l$.
Therefore, one can argue that
$$
  \E \tilde \Delta^*_{\lfloor mT\rfloor}\leq (1-(1-C-\hat{C})p_m)^{
  \lfloor mT\rfloor}
$$
which in the limit brings us to
$$
  \E \tilde \Delta^*_t\leq e^{-(1-\hat{C}-C)T}\,.
$$

\section{Conclusion and open problems}
\label{sec:concl-open-probl}

We have defined a wide class of dynamically constructed point
processes where new particles are added randomly with their
distribution depending on a local neighbourhood of
a randomly uniformly selected particle. Exact notion of locality is
based on stopping sets methodology. In this paper we considered only
the case of models where this set is the ball centred in the selected
particle with the radius equal the distance to the closest
particle. Obvious generalisation is to consider the ball to the $k$-th
nearest neighbour, as it is the case in the original Darling's model
described in Introduction. But even for the case $k=2$ we were not
able to find ways to control the spread of the particles' cloud. So
all the questions of boundedness and/or existence of a limiting
measures remain largely open. 

Boundedness results we established in
Section~\ref{sec:bound-sequ-proc} are only sufficient conditions. In
fact, we were not able to prove unbounded behaviour in any model. Our
hypothesis is that if the distribution of the variable $\|\psi\|$ in
Example~\ref{ex:cld} is heavy-tailed (the shootout distance
distribution before scaling), this should produce clouds of particles
which spread indefinitely.

As already mentioned in the previous Section, existence of a limiting
measure is also an open question for all the models but two Considers
there. We tend to think that boundedness of a model should suffice for
a weak limit to exist.

\begin{Ack}
  The authors are grateful to Richard Darling who shared the initial
  model with one of us (SZ). We also thank Mikhail Menshikov and
  Victor Kleptsyn for useful and inspiring discussions. SZ also
  acknowledges hospitality of the University of Berne where a part of this
  work has been done.
\end{Ack}

\bibliographystyle{plain}
\bibliography{dirichlet}

\end{document}